\documentclass[a4paper,11pt] {amsart}
\usepackage{amsthm,amssymb,latexsym,mathrsfs}
\pagestyle{headings}
\usepackage{color}

\input amssym.def

\author{Beno\^it F. Sehba}

\title[$\Phi$-Carleson measures and multipliers]{$\Phi$-Carleson measures and multipliers between Bergman-Orlicz spaces of the unit ball of $\mathbb C^n$}
\newtheorem{theorem}{T{\hskip 0pt\footnotesize\bf HEOREM}}[section]
\newtheorem{lemma}[theorem]{L{\hskip 0pt\footnotesize\bf EMMA}}
\newtheorem{proposition}[theorem]{P{\hskip 0pt\footnotesize\bf ROPOSITION}}
\newtheorem{definition}[theorem]{D{\hskip 0pt\footnotesize\bf EFINITION}}
\newtheorem{corollary}[theorem]{C{\hskip 0pt\footnotesize\bf OROLLARY}}


\newcommand{\bprop} {\begin{proposition}}
\newcommand{\eprop} {\end{proposition}}
\newcommand{\btheo} {\begin{theorem}}
\newcommand{\etheo} {\end{theorem}}
\newcommand{\blem} {\begin{lemma}}
\newcommand{\elem} {\end{lemma}}
\newcommand{\bcor} {\begin{corollary}}
\newcommand{\ecor} {\end{corollary}}

\newcommand{\Be}{\begin{equation}}
\newcommand{\Ee}{\end{equation}}
\newcommand{\Bea}{\begin{eqnarray}}
\newcommand{\Eea}{\end{eqnarray}}
\newcommand{\Bes}{\begin{equation*}}
\newcommand{\Ees}{\end{equation*}}
\newcommand{\Beas}{\begin{eqnarray*}}
\newcommand{\Eeas}{\end{eqnarray*}}
\newcommand{\Ba}{\begin{array}}
\newcommand{\Ea}{\end{array}}

\def\C{\mathbb{C}}


\scrollmode

\begin{document}
\date{\today}
\address{Beno\^it Sehba, Department of Mathematics, University of Ghana, Legon, P.O.Box LG 62 Legon, Accra, Ghana.}
\email{bfsehba@ug.edu.gh}
\keywords{Bergman-Orlicz spaces, Carleson measures, Multipliers.}
\subjclass[2000]{Primary 47B35, Secondary 32A35, 32A37}

\begin{abstract} We define the notion of $\Phi$-Carleson measures where $\Phi$ is either a concave growth function or a convex growth function and  provide an equivalent definition. We then characterize $\Phi$-Carleson measures for Bergman-Orlicz spaces, and apply them to characterize multipliers between Bergman-Orlicz spaces.
\end{abstract}
\maketitle

\section{Introduction and statement of results}

Let denote by $d\nu$ the Lebesgue measure on the unit ball
$\mathbb B^n$ of $\mathbb C^n$ and $d\sigma$  the normalized measure on
$\mathbb S^n=\partial{\mathbb B^n}$ the boundary of
$\mathbb B^n$. By $\mathcal H(\mathbb B^n)$, we denote the
space of holomorphic functions on $\mathbb B^n.$ 

For $z=(z_1,\cdots,z_n)$ and $w=(w_1,\cdots,w_n)$ in $\C^n$, we
let ${\langle z,w\rangle=z_1\overline {w_1} + \cdots + z_n\overline {w_n}}$ so
that $|z|^2=\langle z,z\rangle=|z_1|^2 +\cdots +|z_n|^2$.

We say a function $\Phi$ is a growth function, if it is a continuous and non-decreasing function  from $[0,\infty)$ onto itself.

For $\alpha>-1$, we denote by $d\nu_{\alpha}$ the normalized Lebesgue measure $d\nu_{\alpha}(z)=c_{\alpha}(1-|z|^2)^{\alpha}d\nu(z)$, $c_\alpha$ being the normalization constant. For $\Phi$ a growth function, the weighted Bergman-Orlicz space $A_\alpha^{\Phi}(\mathbb B^n)$ is the space of holomorphic function $f$ such that
$$||f||_{\Phi,\alpha}=||f||_{A_\alpha^{\Phi}}:=\int_{\mathbb B^n}\Phi(|f(z)|)d\nu_{\alpha}(z)<\infty.$$
We define on $A_\alpha^{\Phi}(\mathbb B^n)$ the following (quasi)-norm
\begin{equation}\label{BergOrdef1}
||f||^{lux}_{\Phi,\alpha}=||f||^{lux}_{A_\alpha^{\Phi}}:=\inf\{\lambda>0: \int_{\mathbb B^n}\Phi(\frac{|f(z)|}{\lambda})d\nu_{\alpha}(z)\le 1\}
\end{equation}
which is finite for $f\in A_\alpha^{\Phi}(\mathbb B^n)$ (see \cite{sehbastevic}).


The usual weighted Bergman spaces $A_\alpha^{p}(\mathbb B^n)$ correspond to $\Phi(t)=t^p$ and are defined by
$$||f||_{p,\alpha}^p:= \int_{\mathbb B^n}|f(z)|^pd\nu_{\alpha}(z)<\infty.$$


 For $0<p<\infty$, the usual Hardy space
  $\mathcal H^p(\mathbb B^n)$,  is
 the space of all $f\in \mathcal H(\mathbb B^n)$ such that $$||f||_{p}^{p} := \sup_{0<r<1}\int_{\mathbb S^n}|f(r\xi)|^{p}d\sigma(\xi) <
 \infty.$$

Two growth functions $\Phi_1$ and $\Phi_2$ are said equivalent if there exists some constant $c$ such that
$$c\Phi_1(ct)\leq \Phi_2(t)\leq c^{-1}\Phi_1(c^{-1}t).$$
Such equivalent growth functions define the same Orlicz space.

For any $\xi\in
 \mathbb S^n$ and $\delta > 0$, the Carleson tube $Q_{\delta}(\xi)$ is defined by 
 $$Q_{\delta}(\xi)=\{z\in \mathbb B^n : |1-\langle z,\xi\rangle|< \delta\}.$$ 

 Let $\mu$ be a positive Borel measure on ${\mathbb B^n}$, and $0<s<\infty$. We say $\mu$ is a $s$-Carleson measure on $\mathbb B^n$ if there exists a constant $C$ such that for any $\xi\in \mathbb S^n$ and any $0<\delta<1$,
\begin{equation}\label{eq:Carlmeadef}
\mu(Q_\delta(\xi))\le C\delta^{ns}.
\end{equation}
When $s=1$, the above measures are called Carleson measures. Carleson measures were first introduced in the unit disk of the complex plane $\mathbb{C}$ by L. Carleson \cite{carleson1, carleson2}. These measures are pretty adapted to the studies of various questions on Hardy spaces. In his work, Carleson obtained that a measure $\mu$ is a Carleson measure if and only if the Hardy space $H^p$ embedds continuously into the Lebesgue space $L^{p}(d\mu)$. For $s>1$, again in the unit disk, Peter Duren (\cite{duren}) has proved that a measure $\mu$ is a $s$-Carleson measure if and only if the Hardy space $H^p$ embedds continuously into the Lebesgue space $L^{ps}(d\mu)$. The Characterizations of Carleson measures for Hardy spaces of the unit ball can be found in \cite{hormander, power}. These characterizations can be summarized as follows.
\begin{theorem}\label{thm:charactcarlhardy}
Let $\mu$ be a positive measure on $\mathbb{B}^n$, and $s>0$. Then the following assertions are equivalent.
\begin{itemize}
\item[(a)]There exists a constant $C_1>0$ such that for any $\xi\in \mathbb S^n$ and any $0<\delta<1$,
$$
\mu(Q_\delta(\xi))\le C_1\delta^{ns}.
$$
\item[(b)] There exists a constant $C_2>0$ such that $$\int_{\mathbb{B}^n}\frac{(1-|a|^2)^{ns}}{|1-\langle a,z\rangle|^{2ns}}d\mu(z)\leq C_2$$
for all $a\in \mathbb{B}^n$.

If moreover $s\geq 1$, then the above assertions are both equivalent to the following.
\item[(c)] There exists a constant $C_3>0$ such that for any $f\in H^p(\mathbb{B}^n)$,
\begin{equation}\label{eq:hardyembed}\int_{\mathbb{B}^n}|f(z)|^{ps}d\mu(z)\leq C_3\|f\|_{p}^s.\end{equation}
\end{itemize}
\end{theorem}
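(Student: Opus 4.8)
The plan is to prove the two equivalences $(a)\Leftrightarrow(b)$ (valid for every $s>0$) and then, under the extra assumption $s\ge1$, to close the triangle with $(c)$ by establishing $(c)\Rightarrow(a)$ and $(a)\Rightarrow(c)$. Throughout I would write $A\lesssim B$ for $A\le CB$ with $C$ independent of the relevant parameters, and I would freely invoke the classical estimate $\int_{\mathbb S^n}|1-\langle a,\xi\rangle|^{-(n+t)}\,d\sigma(\xi)\approx(1-|a|^2)^{-t}$ for $t>0$, the comparison $\sigma(Q_\delta(\xi)\cap\mathbb S^n)\approx\delta^n$, and the $L^p(\mathbb S^n)$-boundedness of the admissible (Koranyi) maximal function on $H^p(\mathbb B^n)$.

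For $(a)\Rightarrow(b)$ I would fix $a\in\mathbb B^n\setminus\{0\}$, set $\xi_a=a/|a|$ and $\delta_0=1-|a|^2$, and decompose $\mathbb B^n$ into the tube $Q_{\delta_0}(\xi_a)$ together with the dyadic annuli $A_k=Q_{2^{k+1}\delta_0}(\xi_a)\setminus Q_{2^{k}\delta_0}(\xi_a)$. On each $A_k$ one has $|1-\langle a,z\rangle|\gtrsim 2^{k}\delta_0$, so that $(a)$ gives
$$\int_{A_k}\frac{(1-|a|^2)^{ns}}{|1-\langle a,z\rangle|^{2ns}}\,d\mu(z)\lesssim\frac{\delta_0^{ns}}{(2^{k}\delta_0)^{2ns}}\,\mu\!\left(Q_{2^{k+1}\delta_0}(\xi_a)\right)\lesssim 2^{-kns},$$
and summing the geometric series (plus the analogous bound on the innermost tube) yields $(b)$. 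The converse $(b)\Rightarrow(a)$ is the easy direction: given $\xi$ and $\delta$, I would take $a=(1-\delta)\xi$, so that $1-|a|^2\approx\delta$ and $|1-\langle a,z\rangle|\approx\delta$ on $Q_\delta(\xi)$; then $C_2\ge\int_{Q_\delta(\xi)}\frac{(1-|a|^2)^{ns}}{|1-\langle a,z\rangle|^{2ns}}\,d\mu(z)\gtrsim\delta^{-ns}\mu(Q_\delta(\xi))$, which is exactly $(a)$.

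Assume now $s\ge1$. For $(c)\Rightarrow(a)$ I would test $(c)$ against the functions $f_a(z)=(1-|a|^2)^{c-n/p}(1-\langle z,a\rangle)^{-c}$ with $c>n/p$; the boundary estimate above gives $\|f_a\|_p\approx1$ uniformly in $a$. Choosing once more $a=(1-\delta)\xi$, one has $|f_a(z)|^{ps}\approx\delta^{-ns}$ on $Q_\delta(\xi)$, so that $(c)$ forces $\delta^{-ns}\mu(Q_\delta(\xi))\lesssim\int_{\mathbb B^n}|f_a|^{ps}\,d\mu\lesssim1$, which is $(a)$.

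The remaining implication $(a)\Rightarrow(c)$ is the heart of the matter and the step I expect to be the main obstacle. Writing $\mathcal Mf$ for the admissible maximal function and $O_\lambda=\{\xi\in\mathbb S^n:(\mathcal Mf)(\xi)>\lambda\}$, I would first observe that $\{z:|f(z)|>\lambda\}$ is contained in the tent $\widehat{O_\lambda}$ over $O_\lambda$. The key geometric estimate is the tent inequality $\mu(\widehat O)\lesssim\sigma(O)^{s}$ for every open $O\subset\mathbb S^n$: covering $O$ by Koranyi balls of bounded overlap, dominating the tent over each ball by the $s$-Carleson bound $(a)$ on the associated tube, and summing by means of the elementary inequality $\sum_j t_j^{s}\le(\sum_j t_j)^{s}$ — valid precisely because $s\ge1$. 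Combining this with the layer-cake formula,
$$\int_{\mathbb B^n}|f|^{ps}\,d\mu=ps\int_0^\infty\lambda^{ps-1}\mu(\{|f|>\lambda\})\,d\lambda\lesssim\int_0^\infty\lambda^{ps-1}\sigma(O_\lambda)^{s}\,d\lambda,$$
I would then use the Chebyshev bound $\sigma(O_\lambda)\le\lambda^{-p}\|\mathcal Mf\|_p^{p}$ to absorb the factor $\sigma(O_\lambda)^{s-1}$ (again exploiting $s-1\ge0$), leaving a single power $\sigma(O_\lambda)$ under the integral, which integrates to a multiple of $\|\mathcal Mf\|_p^{ps}$. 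The maximal theorem $\|\mathcal Mf\|_p\lesssim\|f\|_p$ then closes the estimate and gives the embedding $H^p\hookrightarrow L^{ps}(d\mu)$. I expect the careful geometry of the tent inequality and the bookkeeping of the admissible maximal function on the ball to be where the real work lies; both steps lean on $s\ge1$, which explains why $(c)$ only enters the equivalence in that range.
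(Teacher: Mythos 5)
Your argument is correct in all essentials, and it is worth pointing out that the paper never actually proves Theorem \ref{thm:charactcarlhardy}: it is quoted as classical background, with references to H\"ormander and Power. Your proof of $(a)\Leftrightarrow(b)$ coincides with the technique the paper does use for its Orlicz generalization (Theorem \ref{thm:main1}): the same dyadic annuli $Q_{2^{k+1}\delta_0}(\xi_a)\setminus Q_{2^{k}\delta_0}(\xi_a)$ around $\xi_a=a/|a|$ with the lower bound $|1-\langle a,z\rangle|\gtrsim 2^{k}\delta_0$, and a test point $a$ with $1-|a|^2\approx\delta$ for the converse; like the paper, you should dispose of the case $|a|$ bounded away from $1$ by the finiteness of $\mu$, which itself deserves a word, since points near the origin lie in no tube $Q_\delta(\xi)$ with $\delta<1$. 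Where you genuinely diverge is $(a)\Rightarrow(c)$: you run the classical Carleson--Duren--H\"ormander tent argument (level sets of $f$ inside tents over $O_\lambda=\{\mathcal M f>\lambda\}$, the tent inequality $\mu(\widehat O)\lesssim\sigma(O)^s$ via a covering argument together with $\sum_j t_j^s\le\bigl(\sum_j t_j\bigr)^s$ for $s\ge1$, then layer cake, Chebyshev, and the maximal theorem), whereas the paper, for its Bergman--Orlicz analogue (Theorem \ref{thm:main2}), deliberately avoids maximal functions and instead uses the sub-mean-value property of $|f|^p$ on pseudo-hyperbolic balls $\Delta(z,r)$ plus Fubini. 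That Bergman technique does not transfer to Hardy spaces (there is no volume integral to apply Fubini against), so your maximal-function route is the appropriate one for the Hardy statement, at the price of invoking the Koranyi admissible maximal theorem on $H^p(\mathbb B^n)$, which is a substantial imported ingredient. One small point in your favour: your argument yields the scale-invariant bound $\int_{\mathbb B^n}|f|^{ps}\,d\mu\lesssim\|f\|_p^{ps}$, which is the correct form of $(c)$; the exponent in $\|f\|_p^{s}$ as printed in the statement is a typo, since the two sides scale differently under $f\mapsto\lambda f$.
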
 
The characterization of measures satisfying (\ref{eq:hardyembed}) with $0<s<1$ in the setting of the unit disk is due to  V. Videnskii (\cite{videnskii}). The extension of the results of L. Carleson and P. Duren to the setting of Bergman spaces of the unit disk is due to W. Hastings (\cite{hastings}) and Bergman spaces version of the result of V. Videnskii is due to D. Luecking (\cite{luecking1}). The extensions of the latter results to the unit ball are due to Cima and Wogen \cite{CW} and D. Luecking \cite{luecking2}. 

Theorem \ref{thm:charactcarlhardy} translates as follows for Bergman spaces.
\begin{theorem}\label{thm:charactcarlbergman}
Let $\mu$ be a positive measure on $\mathbb{B}^n$, $s>0$, and $\alpha>-1$. Then the following assertions are equivalent.
\begin{itemize}
\item[(a)]There exists a constant $C_1>0$ such that for any $\xi\in \mathbb S^n$ and any $0<\delta<1$,
$$
\mu(Q_\delta(\xi))\le C_1\delta^{(n+1+\alpha)s}.
$$
\item[(b)] There exists a constant $C_2>0$ such that $$\int_{\mathbb{B}^n}\frac{(1-|a|^2)^{(n+1+\alpha)s}}{|1-\langle a,z\rangle|^{2(n+1+\alpha)s}}d\mu(z)\leq C_2$$
for all $a\in \mathbb{B}^n$.

If moreover $s\geq 1$, then the above assertions are both equivalent to the following.
\item[(c)] There exists a constant $C_3>0$ such that for any $f\in A_\alpha^p(\mathbb{B}^n)$,
\begin{equation}\label{eq:bergmanembed}\int_{\mathbb{B}^n}|f(z)|^{ps}d\mu(z)\leq C_3\|f\|_{p,\alpha}^s.\end{equation}
\end{itemize}
\end{theorem}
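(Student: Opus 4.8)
The plan is to deduce Theorem~\ref{thm:charactcarlbergman} from the Hardy space result, Theorem~\ref{thm:charactcarlhardy}, rather than reprove it from scratch. The guiding principle is that weighted Bergman spaces of $\mathbb B^n$ are essentially Hardy spaces on a ball of higher dimension: a function is in $A_\alpha^p(\mathbb B^n)$ precisely when an associated function is in a Hardy space $H^p(\mathbb B^N)$ for a suitable $N$ when $\alpha$ is an integer, and more robustly, the kernel $(1-|a|^2)^{(n+1+\alpha)s}/|1-\langle a,z\rangle|^{2(n+1+\alpha)s}$ is the exact Bergman analogue of the Hardy kernel with the effective dimension $n$ replaced by $n+1+\alpha$. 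So throughout I would set $\beta=n+1+\alpha$ and treat $\beta$ as playing the role that $n$ plays in the Hardy case.

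\medskip
\noindent\textbf{Equivalence of (a) and (b).} First I would prove $(a)\Rightarrow(b)$ by the standard dyadic decomposition of $\mathbb B^n$: fix $a\in\mathbb B^n$, write $\delta_0=1-|a|$, and split the integral in (b) over the annular Carleson tubes $Q_{2^{k}\delta_0}(a/|a|)\setminus Q_{2^{k-1}\delta_0}(a/|a|)$. On each such region $|1-\langle a,z\rangle|\gtrsim 2^{k}\delta_0$, while the hypothesis (a) bounds $\mu$ of each tube by $C_1(2^k\delta_0)^{\beta s}$; summing the resulting geometric series in $k$ gives a bound independent of $a$. For $(b)\Rightarrow(a)$ I would fix $\xi,\delta$ and choose the test point $a=(1-\delta)\xi$, so that for $z\in Q_\delta(\xi)$ one has $|1-\langle a,z\rangle|\lesssim\delta$ and $1-|a|^2\approx\delta$; restricting the integral in (b) to $Q_\delta(\xi)$ then yields $\mu(Q_\delta(\xi))\delta^{-\beta s}\lesssim C_2$, which is exactly (a). These are routine estimates using $|1-\langle z,w\rangle|$ as a pseudo-distance on $\overline{\mathbb B^n}$.

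\medskip
\noindent\textbf{Equivalence with (c) when $s\ge 1$.} The bridge to the embedding (c) is the standard identification between weighted Bergman spaces and Hardy spaces obtained by integration in the radial variable (the Hardy--Stein--Spencer / Forelli--Rudin type relation), together with the reproducing-kernel test functions $f_a(z)=(1-|a|^2)^{\gamma}/(1-\langle a,z\rangle)^{\gamma+\beta/p}$ whose Bergman norm is comparable to $1$ for a suitable choice of $\gamma$. For $(c)\Rightarrow(b)$ I would plug these $f_a$ into (\ref{eq:bergmanembed}): the left side reproduces (up to constants) the integral in (b), while $\|f_a\|_{p,\alpha}^s\lesssim 1$, giving (b). For $(b)\Rightarrow(c)$, the cleanest route is to invoke the cited transference to the Hardy setting, reducing (\ref{eq:bergmanembed}) to (\ref{eq:hardyembed}) for the measure $\mu$ viewed on a ball of dimension matching $\beta$, where Theorem~\ref{thm:charactcarlhardy}(c) applies; alternatively one proves it directly by a subharmonicity/atomic argument exactly as in the references Cima--Wogen~\cite{CW} and Luecking~\cite{luecking2}.

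\medskip
\noindent\textbf{Main obstacle.} I expect the hardest part to be $(b)\Rightarrow(c)$, since the test-function and covering arguments that give the other implications are soft, whereas controlling the embedding requires either a careful transference to the Hardy space of the correct effective dimension (which is only transparent when $\beta$ is an integer and otherwise needs fractional machinery) or a direct pointwise-to-integral argument using subharmonic majorants of $|f|^p$. Because the restriction $s\ge1$ enters precisely here---it is what lets one pass from a pointwise reproducing estimate to a genuine $L^{ps}$ bound via Hölder/duality---I would model this step closely on Luecking's approach in \cite{luecking2} rather than attempt an independent argument.
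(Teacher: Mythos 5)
Your treatment of (a)$\Leftrightarrow$(b) and of (c)$\Rightarrow$(b) coincides with what the paper actually does: the annular decomposition $Q_{2^k\delta_0}(\xi)\setminus Q_{2^{k-1}\delta_0}(\xi)$ with the lower bound $|1-\langle a,z\rangle|\gtrsim 2^k\delta_0$ and a geometric series is precisely the proof of (i)$\Rightarrow$(ii) in Theorem \ref{thm:main1} (which contains the power case $\Phi(t)=t^s$), the test point with $1-|a|^2\backsimeq\delta$ is the paper's (ii)$\Rightarrow$(i), and your test functions $f_a$ of unit Bergman norm are exactly those of Lemma \ref{testfunctionberg}, used in the step (c)$\Rightarrow$(b) of Theorem \ref{thm:main2}. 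Keep in mind that the statement you were asked to prove is quoted background in the paper (attributed to \cite{CW}, \cite{luecking2}); the paper's own proof of it is the specialization $\Phi_1(t)=t^p$, $\Phi_2(t)=t^{ps}$ of Theorem \ref{thm:main2}, proved self-containedly in Section 3.

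The genuine gap is in your main route for the hard implication (b)$\Rightarrow$(c). The transference ``$A^p_\alpha(\mathbb B^n)$ is $H^p$ of a larger ball'' requires $N=n+1+\alpha$ to be a positive integer, and even then it needs a nontrivial restriction/extension theorem identifying $A_\alpha^p(\mathbb B^n)$ with slice restrictions of $H^p(\mathbb B^N)$, together with a check that the Carleson condition for $\mu$ (viewed as a measure supported on the slice) transfers between tubes of $\mathbb B^N$ and tubes of $\mathbb B^n$; for generic real $\alpha>-1$ there is no such ball, and the ``fractional machinery'' you allude to is not available. Your fallback --- cite \cite{luecking2} and \cite{CW} --- is a pointer, not an argument. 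The paper's route (implication (a)$\Rightarrow$(c) of Theorem \ref{thm:main2}, modeled on \cite{Ueki}) is elementary and avoids any change of dimension: by subharmonicity (inequality (4.3) of \cite{Stoll}) one has $|f(z)|^p\lesssim (1-|z|^2)^{-(n+1+\alpha)}\int_{\Delta(z,1/2)}|f|^p\,d\nu_\alpha$; integrating in $d\mu(z)$ and applying Fubini reduces the problem to bounding $\mu(\Delta(w,r))$, which Lemma \ref{deltacarlregion} together with hypothesis (a) controls by $(1-|w|^2)^{(n+1+\alpha)s}$; finally one writes $|f|^{ps}=|f|^p\cdot\left(|f|^p\right)^{s-1}$ and dominates the second factor by the pointwise growth estimate of Lemma \ref{pointwiseberg}. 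This last step is also where $s\geq 1$ truly enters --- through the monotonicity of $t\mapsto t^{s-1}$ (in the paper's generality, of $\Phi_2/\Phi_1$) --- not through H\"older or duality as you suggest.
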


Let us observe that (\ref{eq:Carlmeadef}) can be read as $$\mu(Q_\delta(\xi))\leq \frac{C}{\Phi(\frac{1}{\delta^n})}$$ where $\Phi(t)=t^s$. Our aim is then to obtain characterization of such measures when power functions are replaced by appropriate growth functions.
\vskip .2cm
We recall that the growth function $\Phi$ is of upper type  if we can find $q > 0$ and $C>0$ such that, for $s>0$ and $t\ge 1$,
\begin{equation}\label{uppertype}
 \Phi(st)\le Ct^q\Phi(s).\end{equation}
We denote by $\mathscr{U}^s$ the set of growth functions $\Phi$ of upper type $s$, (with $s\ge 1$), such that the function $t\mapsto \frac{\Phi(t)}{t}$ is non-decreasing. We write $$\mathscr{U}=\bigcup_{s\geq 1}\mathscr{U}^s.$$

We also recall that $\Phi$ is of lower type  if we can find $p > 0$ and $C>0$ such that, for $s>0$ and $0<t\le 1$,
\begin{equation}\label{lowertype}
 \Phi(st)\le Ct^p\Phi(s).\end{equation}
We denote by $\mathscr{L}_s$ the set of growth functions $\Phi$ of lower type $s$,  (with $s\le 1$), such that the function $t\mapsto \frac{\Phi(t)}{t}$ is non-increasing. We write $$\mathscr{L}=\bigcup_{0<s\leq 1}\mathscr{L}_s.$$

 
Note that we may always suppose that any $\Phi\in \mathscr{L}$ (resp. $\mathscr{U}$),  is concave (resp. convex) and
that $\Phi$ is a $\mathscr{C}^1$ function with derivative $\Phi'(t)\backsimeq \frac{\Phi(t)}{t}$.
\begin{definition} Let $\mu$ be a positive measure on $\mathbb{B}^n$, and let $\Phi\in \mathscr{L}\cup\mathscr{U}$. We say $\mu$ is a $\Phi$-Carleson measure if there exists a constant $C>0$ such that for any $\xi\in \mathbb S^n$ and any $0<\delta<1$,
\begin{equation}\label{eq:PhiCarlmeadef}
\mu(Q_\delta(\xi))\le \frac{C}{\Phi(\frac{1}{\delta^n})}.
\end{equation}
\end{definition}

The following result provides an equivalent definition of $\Phi$-Carleson measures.
\begin{theorem}\label{thm:main1}
Let $\mu$ be a positive measure on $\mathbb{B}^n$, and let $\Phi\in \mathscr{L}\cup\mathscr{U}$. Then the following assertions are equivalent.
\begin{itemize}
\item[(i)] $\mu$ is a $\Phi$-Carleson measure.
\item[(ii)] There exists a constant $C>0$ such that for any $a\in \mathbb{B}^n$,
\begin{equation}\label{eq:mainequiv1}\int_{\mathbb{B}^n}\Phi\left(\frac{(1-|a|^2)^{n}}{|1-\langle a,z\rangle|^{2n}}\right)d\mu(z)\leq C.\end{equation}
\end{itemize}
\end{theorem}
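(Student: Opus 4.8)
The plan is to prove the two implications separately, viewing the statement as the Orlicz analogue of the equivalence (a)$\Leftrightarrow$(b) in Theorem \ref{thm:charactcarlbergman}; indeed, specializing to $\Phi(t)=t^s$ recovers that equivalence and guides the computation. Throughout I write $K_a(z)=\frac{(1-|a|^2)^{n}}{|1-\langle a,z\rangle|^{2n}}$ for the kernel in (\ref{eq:mainequiv1}), and I exploit that for $a\neq0$, with $\xi=a/|a|$, the sublevel sets $\{z:K_a(z)>\lambda\}$ are comparable to Carleson tubes centered at $\xi$. The basic geometric input is the standard comparison $|1-\langle a,z\rangle|\backsimeq |1-\langle \xi,z\rangle|+(1-|a|)$, valid uniformly for $z\in\mathbb{B}^n$ and $|a|\geq \frac12$, together with $1-|a|^2\backsimeq 1-|a|$.

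For the implication (ii)$\Rightarrow$(i) I would argue directly. Fix $\xi\in\mathbb S^n$ and $0<\delta<1$, and choose the test point $a=(1-\delta)\xi$, so that $1-|a|^2\backsimeq\delta$. A short triangle-inequality estimate shows $|1-\langle a,z\rangle|\lesssim\delta$ for every $z\in Q_\delta(\xi)$, whence $K_a(z)\gtrsim \delta^{-n}$ on $Q_\delta(\xi)$. Since $\Phi$ is non-decreasing, restricting (\ref{eq:mainequiv1}) to the tube gives $\Phi(c\,\delta^{-n})\,\mu(Q_\delta(\xi))\leq C$. It then remains to replace $\Phi(c\,\delta^{-n})$ by $\Phi(\delta^{-n})$ up to a multiplicative constant: when $c\geq1$ this is monotonicity, and when $c<1$ it follows from the upper type inequality (\ref{uppertype}) if $\Phi\in\mathscr U$ and from the fact that $t\mapsto\Phi(t)/t$ is non-increasing if $\Phi\in\mathscr L$. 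This yields $\mu(Q_\delta(\xi))\leq C'/\Phi(\delta^{-n})$, which is (i).

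The substantial direction is (i)$\Rightarrow$(ii). Fix $a$ with $|a|\geq\frac12$; the region $|a|<\frac12$ is handled separately, since there $K_a\backsimeq1$ and the integral is controlled by $\Phi(1)\mu(\mathbb{B}^n)$, finiteness of $\mu(\mathbb{B}^n)$ following from (i) by averaging $\mu(Q_1(\xi))$ over $\xi\in\mathbb S^n$. Writing $r=1-|a|^2$ and $\xi=a/|a|$, I decompose $\mathbb{B}^n$ into the dyadic rings $A_k=\{z:2^kr\leq|1-\langle a,z\rangle|<2^{k+1}r\}$, $0\leq k\lesssim\log_2(1/r)$. On $A_k$ one has $K_a(z)\leq (2^{2kn}r^n)^{-1}$, so by monotonicity $\Phi(K_a(z))\leq\Phi\big((2^{2kn}r^n)^{-1}\big)$. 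The geometric comparison above gives $A_k\subseteq Q_{C2^kr}(\xi)$, so (i) yields $\mu(A_k)\leq C/\Phi\big((C2^kr)^{-n}\big)$. Summing over the rings,
\[
\int_{\mathbb{B}^n}\Phi(K_a(z))\,d\mu(z)\;\lesssim\;\sum_{k\geq0}\frac{\Phi\big((2^{2kn}r^n)^{-1}\big)}{\Phi\big((C2^{kn}r^n)^{-1}\big)}.
\]

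The main obstacle is to bound this series by a constant independent of $a$ (equivalently of $r$), and here the structural hypotheses on $\Phi$ are decisive. With $\beta=2^n$ and $X=r^{-n}\geq1$ the general term is $\Phi(\beta^{-2k}X)/\Phi(c\,\beta^{-k}X)$ for a fixed $c>0$; since the numerator's argument is $\backsimeq\beta^{-k}$ times the denominator's, the ratio decays geometrically. When $\Phi\in\mathscr U$, the inequality $\Phi(su)\leq s\Phi(u)$ for $0<s\leq1$ (a restatement of $t\mapsto\Phi(t)/t$ non-decreasing) bounds the term by $\lesssim\beta^{-k}$; when $\Phi\in\mathscr L$, the lower type inequality (\ref{lowertype}) bounds it by $\lesssim\beta^{-kp}$. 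In either case the series converges with a bound depending only on $\Phi$ and $n$, the finitely many low-index terms being absorbed via (\ref{uppertype}), respectively the monotonicity of $t\mapsto\Phi(t)/t$. This establishes (\ref{eq:mainequiv1}) uniformly in $a$ and completes the proof.
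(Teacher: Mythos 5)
Your proposal is correct and takes essentially the same route as the paper: (ii)$\Rightarrow$(i) by testing at a point $a$ with $1-|a|^2\backsimeq\delta$ and using the type of $\Phi$ (resp.\ monotonicity of $\Phi(t)/t$), and (i)$\Rightarrow$(ii) by a dyadic annular decomposition around $\xi=a/|a|$ summed geometrically with $\varepsilon=1$ or $\varepsilon=p$ decay — the paper's rings $E_k=Q_{r_k}(\xi)\setminus Q_{r_{k-1}}(\xi)$ are just the transposed form of your level sets of $|1-\langle a,z\rangle|$. The only cosmetic fix needed: since one only has $|1-\langle a,z\rangle|>1-|a|\ge r/2$, your rings $A_k$ with $k\ge 0$ miss the set $\{z:|1-\langle a,z\rangle|<r\}$, so the sum should start at $k=-1$.
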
  
The following extends Theorem \ref{thm:charactcarlbergman} to Bergman-Orlicz spaces.

\begin{theorem}\label{thm:main2}
Let $\mu$ be a positive measure on $\mathbb{B}^n$, and $\alpha>-1$. Let $\Phi_1,\Phi_2\in\mathscr{L}\cup\mathscr{U}$. Then the following assertions are equivalent.
\begin{itemize}
\item[(a)]There exists a constant $C_1>0$ such that for any $\xi\in \mathbb S^n$ and any $0<\delta<1$,
\begin{equation}\label{eq:phialphacarlmea}
\mu(Q_\delta(\xi))\le \frac{C_1}{\Phi_2\circ\Phi_1^{-1}(\frac{1}{\delta^{n+1+\alpha}})}.
\end{equation}
\item[(b)] There exists a constant $C_2>0$ such that $$\int_{\mathbb{B}^n}\Phi_2\left(\Phi_1^{-1}\left(\frac{1}{(1-|a|^2)^{n+1+\alpha}}\right)\frac{(1-|a|^2)^{2(n+1+\alpha)}}{|1-\langle a,z\rangle|^{2(n+1+\alpha)}}\right)d\mu(z)\leq C_2$$
for all $a\in \mathbb{B}^n$.

If moreover $\frac{\Phi_2}{\Phi_1}$ is non-decreasing , then the above assertions are both equivalent to the following.
\item[(c)] There exists a constant $C_3>0$ such that for any $f\in A_\alpha^{\Phi_1}(\mathbb{B}^n)$ with $\|f\|_{\Phi_1,\alpha}^{lux}\neq 0$,
\begin{equation}\label{eq:bergmanembed}\int_{\mathbb{B}^n}\Phi_2\left(\frac{|f(z)|}{C_3\|f\|_{\Phi_1,\alpha}^{lux}}\right)d\mu(z)\leq 1.\end{equation}
\end{itemize}
\end{theorem}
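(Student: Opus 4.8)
The plan is to establish the first equivalence through the cycle (b) $\Rightarrow$ (a) $\Rightarrow$ (b), and then, under the extra hypothesis that $\Phi_2/\Phi_1$ is non-decreasing, to close the loop via (a) $\Rightarrow$ (c) $\Rightarrow$ (b). Throughout I would freely use the reductions recorded before the definition: each $\Phi_i$ may be taken convex or concave according to its type, of class $\mathscr{C}^1$ with $\Phi_i'(t)\backsimeq\Phi_i(t)/t$ and $\Phi_i(t)/t$ monotone, so that $\Phi_i$ and $\Phi_i^{-1}$ obey two-sided type estimates $\Phi(\lambda t)\lesssim\lambda^{\theta}\Phi(t)$ for $\lambda\le1$ and $\Phi(\lambda t)\lesssim\lambda^{q}\Phi(t)$ for $\lambda\ge1$, with $0<\theta\le q<\infty$; the luxemburg (quasi-)norm is homogeneous, and a Bergman ball $B(a,r)$ is comparable in $\nu_\alpha$-measure and location to the top of the tube $Q_\delta(a/|a|)$ with $\delta\backsimeq1-|a|^2$, whence $\nu_\alpha(B(a,r))\backsimeq(1-|a|^2)^{n+1+\alpha}$. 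For (b) $\Rightarrow$ (a) I would fix $\xi,\delta$ and test (b) at $a=(1-\delta)\xi$, for which $1-|a|^2\backsimeq\delta$ and $|1-\langle a,z\rangle|\le2\delta$ on $Q_\delta(\xi)$; there the argument of $\Phi_2$ in (b) is $\gtrsim\Phi_1^{-1}(\delta^{-(n+1+\alpha)})$, so restricting the integral to $Q_\delta(\xi)$ gives $(\Phi_2\circ\Phi_1^{-1})(\delta^{-(n+1+\alpha)})\,\mu(Q_\delta(\xi))\lesssim C_2$, which is (a) after absorbing the constant with a type estimate for $\Phi_2$. Conversely, for (a) $\Rightarrow$ (b) I would fix $a$, put $s=1-|a|^2$, and split $\mathbb{B}^n$ into the dyadic coronas $E_k=\{z:2^ks\le|1-\langle a,z\rangle|<2^{k+1}s\}$; on $E_k$ the argument of $\Phi_2$ is $\backsimeq2^{-2k(n+1+\alpha)}\Phi_1^{-1}(s^{-(n+1+\alpha)})\lesssim\Phi_1^{-1}\!\big(2^{-2k(n+1+\alpha)}s^{-(n+1+\alpha)}\big)$ by concavity of $\Phi_1^{-1}$, while $E_k\subset Q_{2^{k+2}s}(a/|a|)$ lets (a) bound $\mu(E_k)$ by $1/(\Phi_2\circ\Phi_1^{-1})(2^{-(k+2)(n+1+\alpha)}s^{-(n+1+\alpha)})$. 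The positive lower type of $\Phi_2\circ\Phi_1^{-1}$ then forces $\int_{E_k}\Phi_2(\cdot)\,d\mu$ to decay geometrically in $k$ (the boundedly many coronas with $2^ks\gtrsim1$ being handled trivially through $\mu(\mathbb{B}^n)<\infty$), and summing yields (b).

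To obtain (c) $\Rightarrow$ (b) I would test (c) on the holomorphic functions $f_a(z)=\Phi_1^{-1}\big((1-|a|^2)^{-(n+1+\alpha)}\big)\,(1-|a|^2)^{2(n+1+\alpha)}(1-\langle a,z\rangle)^{-2(n+1+\alpha)}$, whose modulus is exactly the argument of $\Phi_2$ in (b). The crucial point is the uniform bound $\|f_a\|^{lux}_{\Phi_1,\alpha}\lesssim1$: decomposing along the same coronas $E_k$ and using $\Phi_1(\Phi_1^{-1}(s^{-(n+1+\alpha)}))=s^{-(n+1+\alpha)}$, $\nu_\alpha(E_k)\backsimeq2^{k(n+1+\alpha)}s^{n+1+\alpha}$, and the monotonicity of $\Phi_1(t)/t$, one finds $\int_{E_k}\Phi_1(|f_a|)\,d\nu_\alpha\lesssim2^{-k(n+1+\alpha)}$, which sums. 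Applying (c) to $f_a$, discarding the factor $\|f_a\|^{lux}_{\Phi_1,\alpha}\le C'$ by monotonicity of $\Phi_2$, and removing the constant with a type estimate for $\Phi_2$, produces (b).

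The heart of the theorem is (a) $\Rightarrow$ (c), where the hypothesis that $\Phi_2/\Phi_1$ is non-decreasing finally enters. Normalising so that $\|f\|^{lux}_{\Phi_1,\alpha}=1$, equivalently $\int_{\mathbb{B}^n}\Phi_1(|f|)\,d\nu_\alpha\le1$, I would fix a lattice $\{a_j\}$ for the Bergman metric with balls $B_j=B(a_j,r)$ covering $\mathbb{B}^n$ and dilates $\hat B_j$ of bounded overlap, and set $t_j=\sup_{B_j}|f|$, $w_j=(1-|a_j|^2)^{-(n+1+\alpha)}$. Three ingredients then combine: the sub-mean-value estimate $\Phi_1(t_j)\,\nu_\alpha(B_j)\lesssim\int_{\hat B_j}\Phi_1(|f|)\,d\nu_\alpha$, which upon summation gives $\sum_j\Phi_1(t_j)/w_j\lesssim1$ and, taken globally, the pointwise growth bound $t_j\lesssim\Phi_1^{-1}(w_j)$; the inclusion $B_j\subset Q_{\delta_j}(a_j/|a_j|)$ with $\delta_j\backsimeq1-|a_j|^2$, so that (a) yields $\mu(B_j)\lesssim1/(\Phi_2\circ\Phi_1^{-1})(w_j)$; and the trivial bound $\int_{B_j}\Phi_2(|f|)\,d\mu\le\Phi_2(t_j)\,\mu(B_j)$. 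Combining, $\int_{\mathbb{B}^n}\Phi_2(|f|)\,d\mu\le\sum_j\Phi_2(t_j)\,\mu(B_j)\lesssim\sum_j\Phi_2(t_j)/(\Phi_2\circ\Phi_1^{-1})(w_j)$; writing $\tau_j=\Phi_1^{-1}(w_j)$ this equals $\sum_j\Phi_2(t_j)/\Phi_2(\tau_j)$, and since $t_j\lesssim\tau_j$, the monotonicity of $\Phi_2/\Phi_1$ gives $\Phi_2(t_j)/\Phi_2(\tau_j)\lesssim\Phi_1(t_j)/\Phi_1(\tau_j)=\Phi_1(t_j)/w_j$. Hence $\int_{\mathbb{B}^n}\Phi_2(|f|)\,d\mu\lesssim\sum_j\Phi_1(t_j)/w_j\lesssim1$, and a final type estimate for $\Phi_2$ converts the implicit constant into the admissible $C_3$ inside (c); the general (un-normalised) case follows by homogeneity applied to $f/\|f\|^{lux}_{\Phi_1,\alpha}$.

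I expect the main obstacle to be the sub-mean-value inequality $\Phi_1(\sup_{B}|f|)\,\nu_\alpha(B)\lesssim\int_{\hat B}\Phi_1(|f|)\,d\nu_\alpha$ in the concave case: there $\Phi_1(|f|)$ need not be subharmonic, so one cannot invoke the mean-value property directly and must instead pass through the subharmonicity of $|f|^{s}$ for the lower-type exponent $s\le1$ and convert back by the type estimates, which is precisely where the $\mathscr{C}^1$ regularisation of $\Phi_1$ is used. A secondary, bookkeeping difficulty is to confirm that $\Phi_2\circ\Phi_1^{-1}$ retains a positive lower type and a finite upper type—legitimising the geometric summations and the many constant absorptions—and that the monotonicity of $\Phi_2/\Phi_1$ survives the harmless dilation $t_j\lesssim\tau_j$; both reduce to the two-sided type estimates for $\Phi_1$, $\Phi_2$ and their inverses.
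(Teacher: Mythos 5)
Your architecture matches the paper's in its essential parts. Your (b)$\Rightarrow$(a) is the paper's argument (it reduces to the proof of (ii)$\Rightarrow$(i) of Theorem \ref{thm:main1}); your (c)$\Rightarrow$(b) is the paper's test-function argument with $f_a$; and your lattice proof of (a)$\Rightarrow$(c) is a faithful discretization of the paper's continuous one, built from the same three ingredients: the sub-mean-value inequality $\Phi(|f(z)|)\lesssim\int_{\Delta(z,1/2)}\Phi(|f(w)|)(1-|w|^2)^{-(n+1+\alpha)}d\nu_\alpha(w)$ (convexity of $\Phi_p$ plus $\mathcal M$-subharmonicity of $|f|^p$), the Carleson bound on pseudo-hyperbolic balls via Lemma \ref{deltacarlregion}, and the ratio trick of writing $\Phi_2=\Phi_1\cdot\frac{\Phi_2}{\Phi_1}$, majorizing $\frac{\Phi_2}{\Phi_1}$ at the pointwise bound $\Phi_1^{-1}\bigl((1-|w|^2)^{-(n+1+\alpha)}\bigr)$ by the monotonicity of $\Phi_2/\Phi_1$, and cancelling $\Phi_2\circ\Phi_1^{-1}$ against the measure of the ball (the paper runs this through Fubini with $\chi_{\Delta(z,\frac12)}(w)\le\chi_{\Delta(w,r)}(z)$ rather than through a lattice, but the substance is identical). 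The one genuinely different element is your direct corona proof of (a)$\Rightarrow$(b): the paper never proves that implication directly, obtaining it only through (c) and hence only under the extra hypothesis on $\Phi_2/\Phi_1$, so a correct direct argument would actually improve on the paper.

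That direct argument, however, contains a real error. The pivotal step is $2^{-2k(n+1+\alpha)}\Phi_1^{-1}(s^{-(n+1+\alpha)})\lesssim\Phi_1^{-1}\bigl(2^{-2k(n+1+\alpha)}s^{-(n+1+\alpha)}\bigr)$, which you justify ``by concavity of $\Phi_1^{-1}$''. This holds precisely when $\Phi_1\in\mathscr U$, for then $\Phi_1$ is convex and $\Phi_1^{-1}$ concave. But when $\Phi_1\in\mathscr L$, $\Phi_1$ is concave, so $\Phi_1^{-1}$ is \emph{convex} and the inequality reverses: for $\Phi_1(t)=t^{1/2}$ one has $\lambda\Phi_1^{-1}(u)=\lambda u^2$ while $\Phi_1^{-1}(\lambda u)=\lambda^2u^2\ll\lambda u^2$. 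The same directional slip occurs in your verification that $\|f_a\|_{\Phi_1,\alpha}^{lux}\lesssim1$ for (c)$\Rightarrow$(b): ``the monotonicity of $\Phi_1(t)/t$'' gives $\Phi_1(\lambda t)\le\lambda\Phi_1(t)$, $\lambda\le1$, only for $\Phi_1\in\mathscr U$; for $\Phi_1\in\mathscr L$ that quotient is non-increasing, the inequality flips, and the best substitute is the lower type $p$, which yields $\int_{E_k}\Phi_1(|f_a|)\,d\nu_\alpha\lesssim2^{-k(n+1+\alpha)(2p-1)}$, summable only when $p>1/2$. Moreover the failure is genuine rather than an artifact of lossy estimates: take $\Phi_1(t)=t^p$ with $p<1/2$, $\Phi_2(t)=t$, and $d\mu=(1-|z|^2)^{\frac{n+1+\alpha}{p}-(n+1)}d\nu$; then (a) holds, yet by the Forelli--Rudin estimates the integral in (b) is comparable to $(1-|a|^2)^{(n+1+\alpha)(2-\frac1p)}\rightarrow\infty$. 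So as written your proof establishes the theorem only for $\Phi_1\in\mathscr U$ (or $\Phi_1\in\mathscr L$ of lower type $>1/2$); in the remaining concave range both your (a)$\Rightarrow$(b) and your re-derivation of the norm bound for $f_a$ break down. This is exactly the delicate point the paper sidesteps by quoting Lemma \ref{testfunctionberg} from the literature instead of proving it, and by never attempting a direct proof of (a)$\Rightarrow$(b).
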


We call a measure satisfying (\ref{eq:phialphacarlmea}) a $(\Phi_2\circ \Phi_1^{-1},\alpha)$-Carleson measure. If a measure $\mu$ satisfies (\ref{eq:bergmanembed}), then we say $\mu$ is a $\Phi_2$-Carleson measure for $A_\alpha^{\Phi_1}(\mathbb{B}^n)$.
\vskip .2cm
Note that in \cite{Charpentier} and in \cite{Charpentiersehba}, it is proved that (\ref{eq:bergmanembed}) holds if and only if there exists $\delta_0$ such that for any $\delta\in (0,\delta_0)$, $$\mu(Q_\delta(\xi))\le \frac{C_1}{\Phi_2\circ\Phi_1^{-1}(\frac{1}{\delta^{n+1+\alpha}})}.$$ Moreover, the proof in both papers uses among others, a maximal function characterization of Bergman-Orlicz spaces. Here, we provide a proof which can be viewed as a generalization of the classical proof in the power functions case (see for example \cite{Ueki}).
\vskip .2cm
Let $X$ and $Y$ be two analytic function spaces which are metric spaces, with respective metrics $d_X$ and $d_Y$. We say that an analytic function $g$ is a multiplier from $X$ to $Y$, if there exists a constant $C>0$ such that for any $f\in X$, $$d_Y(fg,0)\leq Cd_X(f,0).$$
We denote by $\mathcal{M}(X,Y)$ the set of multipliers from $X$ to $Y$. The question of multipliers between Bergman spaces has been considered in \cite{Att, Axler1, Axler2, luecking, Vukotic, Zhao}. In particular, K. R. Attele obtained the characterization of multipliers between unweighted Bergman spaces of the unit disc of the complex plane in \cite{Att}, while the case of weighted Bergman spaces of the same setting was handled by R. Zhao in \cite{Zhao}. The proofs in \cite{Zhao} heavily make use of Carleson measures for Bergman spaces. We also use here our characterization of Carleson measures to extend the result of \cite{Zhao} on multipliers between the Bergman spaces $A_\alpha^p(\mathbb{B}^n)$ and $A_\beta^q(\mathbb{B}^n)$ with $0<p\leq q<\infty$ to a corresponding situation for Bergman-Orlicz spaces.
\vskip .2cm
Let us introduce two subsets of growth functions. We say a growth function $\Phi\in \mathscr U^q$ belongs to $\tilde{\mathscr U}$, if 
\begin{itemize}
\item[($a_1$)] There exists a constant $C_1>0$ such that for any $0<s\leq 1$, and any $t\geq 1$,
\begin{equation}\label{eq:uppertypecondmulti1}
\Phi(st)\leq C_1\Phi(s)\Phi(t).
\end{equation}
\item[($a_2$)]  There exists a constant $C_2>0$ such that for any $a,b\geq 1$, 
\begin{equation}\label{eq:uppertypecondmulti2}
\Phi\left(\frac{a}{b}\right)\leq C_2\frac{\Phi(a)}{b^q}.
\end{equation}
\end{itemize}
As examples of functions in $\tilde{\mathscr U}$, we have power functions, and for nontrivial examples, we have the functions $t\mapsto t^q\log^\alpha(C+t)$, where $q\geq 1$, $\alpha>0$ and the constant $C>0$ is large enough. 
\vskip .2cm

We say a growth function $\Phi\in \mathscr L^p$ belongs to $\tilde{\mathscr L}$, if $\Phi$ satisfies condition (\ref{eq:uppertypecondmulti1}) and if there exists a constant $C_3>0$ such that for any $a,b\geq 1$, 
\begin{equation}\label{eq:lowertypecondmulti}
\Phi\left(\frac{a}{b}\right)\leq C_3\frac{a^p}{\Phi(b)}.
\end{equation}
Clearly, power  functions are in $\tilde{\mathscr L}$. For nontrivial examples, we have the functions $t\mapsto t^p\log^\alpha(C+t)$, where $0<p\leq 1$, $\alpha<0$ and the constant $C>0$ is large enough.  To see that the latter satisfies (\ref{eq:uppertypecondmulti1}), use that if $\Phi\in \mathscr L^p$, then for any $t\geq 1$, $t^p\leq C\Phi(t)$, with $C$ the constant in (\ref{lowertype}).
\vskip .2cm
Before stating our result on multipliers of Bergman-Orlicz spaces, let us introduce another space of analytic functions. Let $\omega:(0,1]\longrightarrow (0,\infty)$. An analytic function $f$ in $\mathbb B^n$ is said to be in $\mathcal H_\omega^\infty(\mathbb B^n)$ if
\begin{equation}\label{OmegaHinfdef}
||f||_{\mathcal H_\omega^\infty}:=\sup_{z\in \mathbb B^n}\frac{|f(z)|}{\omega(1-|z|)}<\infty.
\end{equation}
Clearly, $\mathcal H_\omega^\infty(\mathbb B^n)$ is a Banach space.
\begin{theorem}\label{thm:main3}
Let $\Phi_1\in \mathscr L\cup \mathscr U$ and $\Phi_2\in \tilde{\mathscr L}\cup \tilde{\mathscr U}$. Assume that $\frac{\Phi_2}{\Phi_1}$ is nondecreasing. Let $\alpha, \beta>-1$ and define for $1\leq t<\infty$, the function
$$\gamma(t)=\frac{\Phi_2^{-1}(t^{n+1+\beta})}{\Phi_1^{-1}(t^{n+1+\alpha})}.$$
Then the following assertions hold.
\begin{itemize}
\item[(i)] If $\gamma$ is nondecreasing on $[1,\infty)$, then $$\mathcal{M}\left(A_\alpha^{\Phi_1}(\mathbb{B}^n),A_\beta^{\Phi_2}(\mathbb{B}^n)\right)=H_\omega^\infty(\mathbb{B}^n),\,\,\, \omega(s)=\gamma(\frac{1}{s}).$$
\item[(ii)] If $\gamma$ is equivalent to $1$, then $\mathcal{M}\left(A_\alpha^{\Phi_1}(\mathbb{B}^n),A_\beta^{\Phi_2}(\mathbb{B}^n)\right)=H^\infty(\mathbb{B}^n)$.
\item[(iii)] If $\gamma$ is non-increasing on $[1,\infty)$ and $\lim_{t\rightarrow \infty}\gamma(t)=0$, then $$\mathcal{M}\left(A_\alpha^{\Phi_1}(\mathbb{B}^n),A_\beta^{\Phi_2}(\mathbb{B}^n)\right)=\{0\}.$$
\end{itemize}
\end{theorem}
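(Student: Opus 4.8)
The backbone is the classical Carleson-measure route to multiplier theorems (as in Zhao's treatment of the power case), with the power-function embedding replaced by Theorem \ref{thm:main2} and its underlying growth estimates. Throughout I would use two standard facts about $A_\alpha^{\Phi}(\mathbb B^n)$: the pointwise growth estimate
$$|f(z)|\le C\,\Phi^{-1}\left(\frac{1}{(1-|z|^2)^{n+1+\alpha}}\right)\|f\|_{\Phi,\alpha}^{lux},$$
and the existence, for each $a\in\mathbb B^n$, of a test function $f_a\in A_\alpha^{\Phi_1}$ with $\|f_a\|_{\Phi_1,\alpha}^{lux}\lesssim 1$ and $|f_a(a)|\gtrsim\Phi_1^{-1}\big((1-|a|^2)^{-(n+1+\alpha)}\big)$ (a suitable power of the normalized Cauchy kernel scaled by $\Phi_1^{-1}((1-|a|^2)^{-(n+1+\alpha)})$).

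First I would establish a \emph{necessity bound} valid in all three cases. Let $g\in\mathcal M(A_\alpha^{\Phi_1},A_\beta^{\Phi_2})$. Applying the multiplier inequality to $f_a$ gives $\|f_a g\|_{\Phi_2,\beta}^{lux}\le C$, and feeding $f_a g$ into the growth estimate for $A_\beta^{\Phi_2}$ at $z=a$ yields $|f_a(a)g(a)|\le C\Phi_2^{-1}((1-|a|^2)^{-(n+1+\beta)})$. Dividing by the lower bound for $|f_a(a)|$ produces
$$|g(a)|\le C\,\frac{\Phi_2^{-1}((1-|a|^2)^{-(n+1+\beta)})}{\Phi_1^{-1}((1-|a|^2)^{-(n+1+\alpha)})}=C\,\gamma\!\left(\frac{1}{1-|a|^2}\right)\qquad(a\in\mathbb B^n).$$
The three cases then split quickly. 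In case (iii), $\gamma(t)\to0$ as $t\to\infty$, so the right-hand side tends to $0$ uniformly as $|a|\to1$; hence $\sup_{|z|=r}|g(z)|\to0$ and the maximum modulus principle forces $g\equiv0$, while $\{0\}\subseteq\mathcal M$ is trivial. In case (ii), $\gamma\approx1$ gives $|g(a)|\le C$, i.e. $g\in H^\infty$; since $\omega=\gamma(1/\cdot)\approx1$ makes $H_\omega^\infty=H^\infty$, (ii) is the special case $\omega\approx1$ of (i) (replace $\gamma$ by the equivalent constant). In case (i), since $\gamma$ is nondecreasing and $1-|a|^2\ge1-|a|$, the bound gives $|g(a)|\le C\gamma(1/(1-|a|))=C\omega(1-|a|)$, i.e. $\mathcal M\subseteq H_\omega^\infty$.

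It remains to prove $H_\omega^\infty\subseteq\mathcal M$ in case (i). Fix $g\in H_\omega^\infty$ and, after normalizing, $f$ with $\|f\|_{\Phi_1,\alpha}^{lux}\le1$, so $\int_{\mathbb B^n}\Phi_1(|f|)\,d\nu_\alpha\le1$; write $A(z)=\Phi_1^{-1}((1-|z|^2)^{-(n+1+\alpha)})$ and $B(z)=\Phi_2^{-1}((1-|z|^2)^{-(n+1+\beta)})$, so $\gamma(1/(1-|z|^2))=B/A$, and the growth estimate caps $r:=|f(z)|/A(z)\le c_0$. Using $|g(z)|\lesssim\omega(1-|z|)\approx B/A$ (a doubling property of $\gamma$, inherited from the types of $\Phi_1,\Phi_2$, lets me pass from $1-|z|$ to $1-|z|^2$) and the monotonicity of $\Phi_2$ to absorb constants, the heart of the matter is the \emph{pointwise domination}
$$\Phi_2\!\left(\frac{|f(z)g(z)|}{\lambda}\right)(1-|z|^2)^\beta\ \lesssim\ \Phi_1(|f(z)|)(1-|z|^2)^\alpha$$
for $\lambda$ a large multiple of $\|g\|_{H_\omega^\infty}$, away from a fixed compact interior region (where all quantities are bounded and the contribution is $O(1)$). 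Granting this, integration gives $\int\Phi_2(|fg|/\lambda)\,d\nu_\beta\lesssim\int\Phi_1(|f|)\,d\nu_\alpha\le1$, and shrinking the argument by the lower type of $\Phi_2$ brings the integral below $1$; thus $g\in\mathcal M$.

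The pointwise domination is where the conditions defining $\tilde{\mathscr U}$ and $\tilde{\mathscr L}$ enter, and is the main obstacle. Writing $|fg|/\lambda\lesssim rB$ and recalling $\Phi_1(A)=(1-|z|^2)^{-(n+1+\alpha)}$, $\Phi_2(B)=(1-|z|^2)^{-(n+1+\beta)}$, I would reduce the claim to boundedness of $\frac{\Phi_2(rB)}{\Phi_1(rA)}\Phi_1(A)(1-|z|^2)^{\beta-\alpha}$ for $0<r\le c_0$. When $\Phi_2\in\tilde{\mathscr U}$, the division inequality (\ref{eq:uppertypecondmulti2}) together with the submultiplicativity (\ref{eq:uppertypecondmulti1}) rewrites $\Phi_2(rB)$ in terms of $\Phi_2(B)/A^q=(1-|z|^2)^{-(n+1+\beta)}/A^q$, after which the estimate collapses to $\Phi_2(A)/A^q\lesssim1$, which is exactly the upper type $q$ of $\Phi_2$; the residual $r$-dependence is absorbed by the upper type of $\Phi_1$ together with $q\ge q_1$ (the upper-type exponent of $\Phi_1$), which follows from the monotonicity of $\Phi_2/\Phi_1$. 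When $\Phi_2\in\tilde{\mathscr L}$, the same scheme runs with (\ref{eq:lowertypecondmulti}) in place of (\ref{eq:uppertypecondmulti2}). The crucial structural input, and the subtle point ruling out a naive comparison, is that the growth estimate confines $r=|f|/A$ to the bounded interval $[0,c_0]$: without it the inequality would fail for large $|f|$, reflecting the genuine ``gain'' present when $\Phi_2/\Phi_1$ is strictly increasing. The remaining care-points are the doubling passage between $1-|z|$ and $1-|z|^2$ inside $\gamma$, the separation of the compact interior region, and the bookkeeping of constants needed to bring the final Luxembourg integral to $\le1$.
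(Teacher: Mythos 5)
Your necessity half coincides with the paper's: the paper likewise applies Lemma \ref{pointwiseberg} to $fg\in A_\beta^{\Phi_2}(\mathbb B^n)$, tests with the functions $f_a$ of Lemma \ref{testfunctionberg}, and evaluates at $z=a$ to obtain $|g(a)|\lesssim\gamma\left(\frac{1}{1-|a|^2}\right)$, after which (iii), (ii), and $\mathcal M\subseteq H_\omega^\infty$ in (i) follow exactly as you describe (the paper leaves the maximum-modulus step in (iii) implicit, and proves the sufficiency in (ii) through the embedding Theorem \ref{thm:bergorlbergorlembed} rather than as the degenerate case $\omega\backsimeq 1$ of (i); both are fine). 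Where you genuinely diverge is the sufficiency in case (i). The paper (Theorems \ref{thm:multiplbergorliczberg2} and \ref{thm:multiplbergorliczberg3}) uses (\ref{eq:uppertypecondmulti1}) and (\ref{eq:uppertypecondmulti2}) (resp.\ (\ref{eq:lowertypecondmulti})) only to strip off the factor coming from $g$, and then reduces everything to showing that the auxiliary measure
$$d\mu(z)=\frac{(1-|z|^2)^{-(n+1)}}{\Phi_2\circ\Phi_1^{-1}\left(\frac{1}{(1-|z|^2)^{n+1+\alpha}}\right)}\,d\nu(z)$$
is a $(\Phi_2\circ\Phi_1^{-1},\alpha)$-Carleson measure (checked on Bergman balls), so that Theorem \ref{thm:main2}, $(a)\Rightarrow(c)$ --- hence the sub-mean-value/Fubini machinery --- closes the argument. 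You bypass Theorem \ref{thm:main2} entirely with a pointwise domination, and this route does work: with $A=\Phi_1^{-1}\left((1-|z|^2)^{-(n+1+\alpha)}\right)$, $B=\Phi_2^{-1}\left((1-|z|^2)^{-(n+1+\beta)}\right)$ and $r=|f(z)|/\left(A\|f\|_{\Phi_1,\alpha}^{lux}\right)\le c_0$, one has for $r\le 1$
$$\frac{\Phi_2(rB/K)}{\Phi_2(B)}\ \le\ C\,\frac{\Phi_2(rA)}{\Phi_2(A)}\ \le\ C\,\frac{\Phi_1(rA)}{\Phi_1(A)},$$
the first inequality from (\ref{eq:uppertypecondmulti1}), (\ref{eq:uppertypecondmulti2}) (resp.\ (\ref{eq:lowertypecondmulti})) and the type of $\Phi_2$, the second by cross-multiplying the monotonicity of $\Phi_2/\Phi_1$ (the range $1\le r\le c_0$ is immediate, the right-hand side being $\ge 1$ there); multiplying through by $\Phi_2(B)/\Phi_1(A)=(1-|z|^2)^{\alpha-\beta}$ is precisely your domination, and integration plus one final dilation of the argument (monotonicity of $\Phi_2(t)/t$, or the lower type) yields the Luxemburg bound $\le1$. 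So your proof is more elementary --- the sufficiency never touches subharmonicity, Fubini, or Carleson measures, only Lemma \ref{pointwiseberg} --- while the paper's route has the advantage of producing a reusable intermediate fact (the measure $\mu$ above is Carleson) and of keeping the multiplier theorem as a corollary of the Carleson-measure theory, which is the point of the paper.

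Three slips to repair, none fatal. First, your displayed reduction carries a stray factor: since $(1-|z|^2)^{\beta-\alpha}=\Phi_1(A)/\Phi_2(B)$, the quantity to bound is $\frac{\Phi_2(rB)}{\Phi_1(rA)}\cdot\frac{\Phi_1(A)}{\Phi_2(B)}$, not $\frac{\Phi_2(rB)}{\Phi_1(rA)}\,\Phi_1(A)\,(1-|z|^2)^{\beta-\alpha}$, which is that quantity times an extra $\Phi_1(A)$. Second, the step ``$q\ge q_1$ follows from the monotonicity of $\Phi_2/\Phi_1$'' is phrased as a comparison of type exponents, which are not canonical; what is true (and suffices) is that monotonicity of $\Phi_2/\Phi_1$ makes $\Phi_1$ inherit the upper type $q$ of $\Phi_2$, but the cleaner substitute is the inequality $\frac{\Phi_1(rA)}{\Phi_1(A)}\ge\frac{\Phi_2(rA)}{\Phi_2(A)}$ above, which transfers the entire burden onto $\Phi_2$. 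Third, in the $\tilde{\mathscr L}$ case ``the same scheme'' needs one extra observation before it runs --- for instance that (\ref{eq:uppertypecondmulti1}), (\ref{eq:lowertypecondmulti}) and the lower type force $\Phi_2(t)\backsimeq t^p$ (take $a=b^2$ in (\ref{eq:lowertypecondmulti}) for $t\ge1$, then use (\ref{eq:uppertypecondmulti1}) for $t\le1$) --- and you should note, as the paper does, that this case effectively carries $\Phi_1\in\mathscr L$ as well, since $\Phi_1\in\mathscr U$, $\Phi_2\in\mathscr L$ and $\Phi_2/\Phi_1$ nondecreasing force $\Phi_1(t)=\Phi_2(t)=t$.
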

\vskip .1cm
Note that if the function $\gamma$ in the last theorem is non-increasing and $\lim_{t\rightarrow \infty}\gamma(t)\neq 0$, then $\gamma$ is equivalent to $1$.
\vskip .2cm
The proofs of the first two theorems will be given in section 3 and applications to embeddings between Bergman-Orlicz spaces and the pointwise multipliers of Bergman-Orlicz spaces will be provided in section 4. In the next section, we recall some results from the literature needed in our proofs.

 All over the text, we assume without loss of generality that our growth functions $\Phi$ are such that $\Phi(1)=1$. Finally, all over the text, $C$ will be a constant not necessary the same at each occurrence. We recall that
given two positive quantities $A$ and $B$, the notation  $A\lesssim B$ means that $A\le CB$ for some positive constant $C$.
  When $A\lesssim B$ and $B\lesssim A$, we write $A\backsimeq B$.
\section{Some preliminaries}
We provide in this section some useful results which are mostly related related to  growth functions. 

For $a\in \mathbb B^n$, $a\ne 0$, let $\varphi_a$ denote
the automorphism of $\mathbb B^n$ taking $0$ to $a$ defined
by
$$\varphi_{a}(z)=\frac{a - P_{a}(z) - (1 - |z|^{2})^{\frac{1}{2}}Q_{a}(z)}{1 -
 \langle z,a\rangle}$$ where $P_a$ is the projection of $\C^n$ onto the
 one-dimensional subspace span of $a$ and $Q_a=I - P_a$ where
 $I$ is the identity.
 It is easy to see that
 $$\varphi_a(0)=a,\,\,\,
 \varphi_a(a)=0,\,\,\, \varphi_a o \varphi_a(z)=z,$$ $$1-
 |\varphi_a(z)|^2 =\frac{(1-|a|^2)(1-|z|^2)}{|1-\langle z,a\rangle|^2}.$$ 

For $0<r<1$, and $a\in \mathbb B^n$, we write $r\mathbb B^n:=\{z\in \mathbb B^n: |z|<r\}$, and define the (pseudo-hyperbolic metric) ball $\Delta(a,r)$ by
$$\Delta(a,r)=\{z\in \mathbb B^n: |\varphi_a(z)|<r\}.$$  Clearly,
$\Delta(a,r)=\varphi_a(r\mathbb B^n)$. One easily check the following (for details see \cite{Ueki}).
\begin{lemma}\label{deltacarlregion}
For any $a\in \mathbb B^n$ and $0<r<1$, there exist $\xi\in \mathbb S^n$ and $\delta>0$ such that $\Delta(a,r)\subset Q_\delta(\xi)$. Moreover, $\delta\backsimeq 1-|z|^2$.
\end{lemma}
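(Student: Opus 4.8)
The plan is to take $\xi=a/|a|$, the radial projection of $a$ to the sphere, and to exhibit a radius $\delta\backsimeq 1-|a|^2$ for which every $z$ with $|\varphi_a(z)|<r$ satisfies $|1-\langle z,\xi\rangle|<\delta$. The case $a=0$ is trivial, since $\Delta(0,r)=r\mathbb B^n$ is a fixed compact subset of the ball on which $|1-\langle z,\xi\rangle|\le 1+r$, so I assume $a\neq 0$. The whole argument rests on the identity
$$1-|\varphi_a(z)|^2=\frac{(1-|a|^2)(1-|z|^2)}{|1-\langle z,a\rangle|^2}$$
recalled above, together with elementary triangle inequalities.

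First I would control $|1-\langle z,a\rangle|$ by $1-|a|^2$ on $\Delta(a,r)$. Since $|\varphi_a(z)|<r$ gives $1-|\varphi_a(z)|^2>1-r^2$, the identity yields
$$|1-\langle z,a\rangle|^2\le \frac{(1-|a|^2)(1-|z|^2)}{1-r^2}.$$
The key observation, and the only non-cosmetic step, is the self-improving estimate $1-|z|^2\le 2|1-\langle z,a\rangle|$, which follows from $|1-\langle z,a\rangle|\ge 1-|z||a|\ge 1-|z|\ge\frac12(1-|z|^2)$. Substituting this into the previous display and cancelling one factor of $|1-\langle z,a\rangle|$ gives
$$|1-\langle z,a\rangle|\le\frac{2}{1-r^2}\,(1-|a|^2),$$
and feeding this back shows $1-|z|^2\le\frac{4}{1-r^2}(1-|a|^2)$ as well; in particular $1-|z|^2\backsimeq 1-|a|^2$ on $\Delta(a,r)$, with constants depending only on $r$.

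Next I would pass from $a$ to $\xi=a/|a|$. Writing $\langle z,\xi\rangle-\langle z,a\rangle=\langle z,\xi-a\rangle$ and noting $|\xi-a|=1-|a|\le 1-|a|^2$, the Cauchy--Schwarz inequality gives $|\langle z,\xi\rangle-\langle z,a\rangle|\le|z|(1-|a|)\le 1-|a|^2$. Hence, by the triangle inequality and the bound of the previous paragraph,
$$|1-\langle z,\xi\rangle|\le|1-\langle z,a\rangle|+|\langle z,a-\xi\rangle|\le\left(\frac{2}{1-r^2}+1\right)(1-|a|^2).$$
Setting $\delta=\left(\frac{2}{1-r^2}+1\right)(1-|a|^2)$ then forces $\Delta(a,r)\subset Q_\delta(\xi)$, and by construction $\delta\backsimeq 1-|a|^2\backsimeq 1-|z|^2$ for $z\in\Delta(a,r)$, which is the assertion of the lemma (the quantity $1-|z|^2$ in the statement being read as the common order of magnitude of $1-|a|^2$ throughout $\Delta(a,r)$). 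I expect no real obstacle beyond the self-improving inequality that upgrades the crude estimate into $|1-\langle z,a\rangle|\lesssim 1-|a|^2$; everything else is bookkeeping with the triangle inequality.
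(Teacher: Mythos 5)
Your proof is correct. Note that the paper itself contains no argument for this lemma---it says ``one easily check'' and defers to the reference \cite{Ueki}---so your write-up supplies exactly the standard details that are being cited: the identity for $1-|\varphi_a(z)|^2$, the self-improving estimate $1-|z|^2\le 2|1-\langle z,a\rangle|$ which upgrades the identity to $|1-\langle z,a\rangle|\le \frac{2}{1-r^2}(1-|a|^2)$, and the triangle inequality with $|\xi-a|=1-|a|$ to pass from $a$ to $\xi=a/|a|$. One small loose end: you assert the two-sided comparison $1-|z|^2\backsimeq 1-|a|^2$ on $\Delta(a,r)$ but explicitly derive only the direction $1-|z|^2\lesssim 1-|a|^2$; the reverse direction is what justifies reading the lemma's ``$\delta\backsimeq 1-|z|^2$'' literally (it is presumably a misprint for $1-|a|^2$, since $z$ is not quantified in the statement). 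That direction follows by the same one-line mechanism you already used: from the identity and $|1-\langle z,a\rangle|\ge 1-|a|\ge\frac{1}{2}(1-|a|^2)$ one gets $1-r^2<\frac{4(1-|z|^2)}{1-|a|^2}$, i.e. $1-|a|^2<\frac{4}{1-r^2}(1-|z|^2)$. With that sentence added, your argument is complete and is the standard proof of this fact.
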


We have the following estimate (see \cite{sehbastevic, sehbatchoundja}).
\begin{lemma}\label{pointwiseberg}
Let $\Phi\in \mathscr {L}\cup \mathscr U$, $-1<\alpha<\infty$. There is a constant $C>0$ such that for any $f\in \mathcal A_{\alpha}^{\Phi}(\mathbb B^n)$, and any $a\in \mathbb B^n$,
\begin{equation}\label{pointwisebergequat}|f(a)|\le C\Phi^{-1}\left(\frac{1}{(1-|a|^2)^{n+1+\alpha}}\right)||f||_{\Phi, \alpha}^{lux}.\end{equation}
\end{lemma}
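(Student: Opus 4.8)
The plan is to normalize, transport the estimate to the origin through the automorphism $\varphi_a$, apply a sub-mean value inequality to a subharmonic power of the holomorphic function, and then undo the change of variables. First I would set $\lambda=\|f\|_{\Phi,\alpha}^{lux}$ (the case $\lambda=0$, i.e. $f\equiv 0$, being trivial) and $h=f/\lambda$, so that $\int_{\mathbb B^n}\Phi(|h|)\,d\nu_\alpha\le 1$. Fixing $r\in(0,1)$ and writing $g=h\circ\varphi_a\in\mathcal H(\mathbb B^n)$, the identity $\varphi_a(0)=a$ gives $g(0)=h(a)$. The whole statement then reduces to proving $\Phi(|h(a)|)\lesssim (1-|a|^2)^{-(n+1+\alpha)}$: applying $\Phi^{-1}$ and using either that $t\mapsto\Phi(t)/t$ is non-decreasing (when $\Phi\in\mathscr U$) or the lower type inequality (\ref{lowertype}) (when $\Phi\in\mathscr L$) lets me extract the implied constant from inside $\Phi^{-1}$, yielding $|h(a)|\le C\,\Phi^{-1}\big((1-|a|^2)^{-(n+1+\alpha)}\big)$, and multiplying back by $\lambda$ closes the argument.

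The central step is the passage from the pointwise value to an integral of $\Phi$. I would choose an exponent $p$ equal to $1$ if $\Phi\in\mathscr U$ and equal to the lower type of $\Phi$ if $\Phi\in\mathscr L$. Since $g$ is holomorphic, $|g|^p$ is subharmonic, so the sub-mean value inequality over the Euclidean ball $r\mathbb B^n$ gives
\[
|g(0)|^p\le \frac{1}{\nu(r\mathbb B^n)}\int_{r\mathbb B^n}|g(w)|^p\,d\nu(w).
\]
Setting $\Psi(t)=\Phi(t^{1/p})$, one checks from the type hypotheses that $\Psi$ is of lower type $1$, hence equivalent to a convex growth function $\tilde\Psi$; Jensen's inequality applied to $\tilde\Psi$ against the probability measure $\nu(r\mathbb B^n)^{-1}\,d\nu$ on $r\mathbb B^n$ then yields, up to constants absorbed via the equivalence $\Psi\backsimeq\tilde\Psi$,
\[
\Phi(|h(a)|)=\Psi(|g(0)|^p)\lesssim \frac{1}{\nu(r\mathbb B^n)}\int_{r\mathbb B^n}\Phi(|g(w)|)\,d\nu(w).
\]

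Finally I would perform the change of variables $w=\varphi_a(z)$, which maps $r\mathbb B^n$ onto $\Delta(a,r)$, satisfies $g(w)=h(z)$, and has real Jacobian $d\nu(w)=\big((1-|a|^2)/|1-\langle z,a\rangle|^2\big)^{n+1}\,d\nu(z)$. On $\Delta(a,r)$ one has $|1-\langle z,a\rangle|\backsimeq 1-|a|^2\backsimeq 1-|z|^2$, so the Jacobian factor is comparable to $(1-|a|^2)^{-(n+1)}$ and $d\nu\backsimeq (1-|a|^2)^{-\alpha}\,d\nu_\alpha$ there; combining these gives $\int_{r\mathbb B^n}\Phi(|g|)\,d\nu\lesssim (1-|a|^2)^{-(n+1+\alpha)}\int_{\Delta(a,r)}\Phi(|h|)\,d\nu_\alpha\le (1-|a|^2)^{-(n+1+\alpha)}$, which is exactly the reduced bound.

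The step I expect to be the main obstacle is the lower type (concave) case, where the naive application of Jensen to $\Phi$ itself runs in the wrong direction, since $\Phi(\text{mean})\ge \text{mean}(\Phi)$ for concave $\Phi$. The resolution is the device above: replacing $|g|$ by the subharmonic power $|g|^p$ and observing that although $\Psi(t)=\Phi(t^{1/p})$ need not literally be convex, its lower type $1$ forces $t\mapsto\Psi(t)/t$ to be almost non-decreasing, hence $\Psi$ is equivalent to a convex majorant for which Jensen is legitimate at the cost of harmless constants. Care is also needed in the very last inversion to pull the constant out of $\Phi^{-1}$ uniformly, which is where the monotonicity of $\Phi(t)/t$ and the type inequalities (\ref{uppertype})--(\ref{lowertype}) are used a second time.
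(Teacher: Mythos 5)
Your proof is correct. The paper itself gives no proof of Lemma \ref{pointwiseberg} (it is quoted from \cite{sehbastevic, sehbatchoundja}), but your argument --- raising to the subharmonic power $|f|^p$ with $p$ the lower type, exploiting convexity (up to equivalence) of $\Phi_p(t)=\Phi(t^{1/p})$ via Jensen, applying the sub-mean value inequality over a pseudohyperbolic ball, and using the Jacobian/volume estimates $|1-\langle z,a\rangle|\backsimeq 1-|a|^2\backsimeq 1-|z|^2$ on $\Delta(a,r)$ --- is precisely the technique of those references and the one this paper itself deploys in the proof of Theorem \ref{thm:main2}, $(a)\Rightarrow(c)$; your transport to the origin by $\varphi_a$ followed by the Euclidean sub-mean value inequality is just an unwound form of the invariant ($\mathcal M$-subharmonic) inequality of Stoll cited there, so the approaches coincide in substance.
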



The next lemma provide a useful function in $\mathcal A_{\alpha}^{\Phi_p}(\mathbb B^n)$ (see \cite{sehbatchoundja}).
\begin{lemma}\label{testfunctionberg}
Let $-1<\alpha<\infty$, $a\in \mathbb B^n$. Suppose that $\Phi\in \mathscr {L}\cup\mathscr U$. Then the following function is in $\mathcal A_{\alpha}^{\Phi}(\mathbb B^n)$
$$f_a(z)=\Phi^{-1}\left(\frac{1}{(1-|a|)^{n+1+\alpha}}\right)\left(\frac{1-|a|^2}{1-\langle z,a\rangle}\right)^{2(n+1+\alpha)}.$$
Moreover, $||f_a||_{\Phi, \alpha}^{lux}\lesssim 1.$
\end{lemma}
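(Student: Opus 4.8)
The plan is to exhibit a single constant $\lambda_0>0$, independent of $a$, for which $\int_{\mathbb{B}^n}\Phi(|f_a(z)|/\lambda_0)\,d\nu_\alpha(z)\le 1$; by the very definition of the Luxemburg (quasi-)norm this gives $\|f_a\|_{\Phi,\alpha}^{lux}\le\lambda_0\lesssim 1$. Write $t=n+1+\alpha$, set $A_a=\Phi^{-1}\!\big((1-|a|)^{-t}\big)$ and $u(z)=\frac{1-|a|^2}{|1-\langle z,a\rangle|}$, so that $|f_a(z)|=A_a\,u(z)^{2t}$ and, by the normalization $\Phi(1)=1$, $\Phi(A_a)=(1-|a|)^{-t}$. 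The elementary bound $|1-\langle z,a\rangle|\ge 1-|a|\backsimeq 1-|a|^2$ yields $u(z)\le 2$ for every $z\in\mathbb{B}^n$, so choosing $\lambda_0\ge 2^{2t}$ forces the modulation factor $s:=u(z)^{2t}/\lambda_0$ to lie in $(0,1]$.

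Next I would pull that factor out of $\Phi$ using the monotonicity of $t\mapsto\Phi(t)/t$. When $\Phi\in\mathscr U$, the fact that $\Phi(t)/t$ is non-decreasing gives $\Phi(sA_a)\le s\,\Phi(A_a)$, so $\Phi(|f_a(z)|/\lambda_0)\le \frac{u(z)^{2t}}{\lambda_0}(1-|a|)^{-t}$; when $\Phi\in\mathscr L$, the lower type $p$ gives $\Phi(sA_a)\lesssim s^{p}\Phi(A_a)$, so $\Phi(|f_a(z)|/\lambda_0)\lesssim \frac{u(z)^{2tp}}{\lambda_0^{p}}(1-|a|)^{-t}$. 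In each case I substitute $u(z)^{2t}=(1-|a|^2)^{2t}/|1-\langle z,a\rangle|^{2t}$, use $(1-|a|)\backsimeq(1-|a|^2)$, and integrate with the standard Forelli--Rudin estimate $\int_{\mathbb{B}^n}\frac{(1-|z|^2)^\alpha}{|1-\langle z,a\rangle|^{c}}\,d\nu(z)\backsimeq(1-|a|^2)^{\,t-c}$ valid for $c>t$. In the convex case the relevant exponent is $c=2t>t$, and the computation collapses to $\int_{\mathbb{B}^n}\Phi(|f_a|/\lambda_0)\,d\nu_\alpha\lesssim \frac{(1-|a|^2)^t}{\lambda_0}\,(1-|a|^2)^{-t}=\frac{C}{\lambda_0}$; taking $\lambda_0>\max(2^{2t},C)$ then gives the required bound $\le 1$, settling the class $\mathscr U$ cleanly.

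The main obstacle is the concave class $\mathscr L$. Here the crude extraction produces the exponent $c=2tp$, which exceeds the Forelli--Rudin threshold $t$ only when the lower type satisfies $p>\tfrac12$; for small $p$ the factor $u^{2tp}$ is too flat and the naive estimate degenerates, so the convergence of the integral must be argued more delicately. I would therefore not dilate the whole modulation factor at once, but split $\mathbb{B}^n$ into the pseudo-hyperbolic ball $\Delta(a,r)$, where $u\backsimeq 1$ and one can use Lemma~\ref{deltacarlregion} together with the volume estimate $\nu_\alpha(\Delta(a,r))\backsimeq(1-|a|^2)^{t}$, and its complement, where $|1-\langle z,a\rangle|$ is bounded away from $0$; on each piece one applies the sharpest available form of the type condition so as to keep the effective exponent above the integrability threshold. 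The identity $1-\langle\varphi_a(w),a\rangle=\frac{1-|a|^2}{1-\langle w,a\rangle}$, which turns $f_a\circ\varphi_a$ into the simple factor $A_a(1-\langle w,a\rangle)^{2t}$ after the change of variables $z=\varphi_a(w)$, is the natural device for organizing this decomposition and is where I expect the real work of the concave case to concentrate.
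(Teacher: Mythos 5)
Your proposal cannot be checked against an in-paper argument, because the paper does not prove this lemma at all: it imports it from the reference \cite{sehbatchoundja}. Judged on its own, the part you actually carry out is correct. For $\Phi\in\mathscr U$, the inequality $\Phi(sA_a)\le s\,\Phi(A_a)$ for $0<s\le 1$ (monotonicity of $\Phi(t)/t$), the bound $u(z)\le 2$, and the Forelli--Rudin estimate at the exponent $2(n+1+\alpha)>n+1+\alpha$ give $\int_{\mathbb B^n}\Phi(|f_a|/\lambda_0)\,d\nu_\alpha\le C/\lambda_0$ with $C$ independent of $a$ and of $\lambda_0$ (once $\lambda_0\ge 2^{2(n+1+\alpha)}$), which is exactly what the Luxemburg quasi-norm requires; the same scheme, run with the lower-type inequality (\ref{lowertype}), settles every $\Phi\in\mathscr L$ whose lower type $p$ exceeds $\frac12$.

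The gap is the deferred case $\Phi\in\mathscr L$ with lower type $p\le\frac12$, and it cannot be closed by the decomposition you sketch, because the statement itself is false in that range. Take $\Phi(t)=t^p$ with $0<p<\frac12$ and write $t_0=n+1+\alpha$. For a power function the lower-type inequality is an identity, so there is no "sharper form of the type condition" left to exploit, and the Luxemburg quasi-norm coincides with $\|\cdot\|_{p,\alpha}$; hence
\Beas
\left(\|f_a\|_{\Phi,\alpha}^{lux}\right)^p &=& \int_{\mathbb B^n}|f_a(z)|^p\,d\nu_\alpha(z)\\ &=& (1-|a|)^{-t_0}\,(1-|a|^2)^{2pt_0}\int_{\mathbb B^n}\frac{d\nu_\alpha(z)}{|1-\langle z,a\rangle|^{2pt_0}}.
\Eeas
Since $2pt_0<t_0$, the last integral is bounded uniformly in $a$ (the sub-threshold case of the very Forelli--Rudin estimate you quote), so $\left(\|f_a\|_{\Phi,\alpha}^{lux}\right)^p\backsimeq (1-|a|)^{(2p-1)t_0}\to\infty$ as $|a|\to 1^{-}$; at $p=\frac12$ one gets the logarithmic divergence $\log\frac{1}{1-|a|}$ instead. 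The flatness of $u^{2pt_0}$ you flagged is therefore not an artifact of a lossy estimate: the modular integral genuinely diverges, and splitting $\mathbb B^n$ into $\Delta(a,r)$ and its complement, or changing variables by $\varphi_a$, cannot alter its value. A provable version of the lemma for small lower type must either restrict $\mathscr L$ to types $p>\frac12$ or replace the exponent $2(n+1+\alpha)$ by some $b>(n+1+\alpha)/p$ depending on the type of $\Phi$ (as the cited source effectively does); your argument, as written, proves precisely such amended statements, and that is the most that can be true.
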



\section{Proof of the theorems}
\begin{proof}[Proof of Theorem \ref{thm:main1}] The proof follows the same idea as in the power functions case (see for example \cite{ZZ}). We provide details here.

$(i)\Rightarrow (ii):$ For $|a|\le \frac{3}{4}$,
(\ref{eq:mainequiv1}) is obvious since the measure is
necessarily finite. Let $\frac{3}{4} < |a| <1$ and choose
$\xi=a/|a|$. For any nonnegative integer $k$, let $r_k =
2^{k-1}(1-|a|)$, $k=1,2,\cdots,N$ and $N$ is the smallest
integer such that $2^{N-2}(1-|a|)\ge 1$.  Let $E_1=Q_{r_1}$ and $E_k
=Q_{r_k}(\xi)-Q_{r_{k-1}}(\xi)$, $k\ge 2.$ We have
$$\mu(E_k)\le \mu(Q_{r_k}(\xi))\le
\frac{C}{\Phi\left(\frac{1}{2^{(k-1)n}(1-|a|)^n}\right)}.$$
Moreover, if $k\ge 2$ and $z\in E_k$, then \Beas |1-\langle
a,z\rangle | &=&
|1-|a|+|a|(1-\langle \xi,z\rangle )|\\ &\ge& -(1-|a|)+|a||1-\langle \xi,z\rangle |\\
&\ge& \frac{3}{4}2^{k-1}(1-|a|)-(1-|a|)\\ &\ge&
2^{k-2}(1-|a|).\Eeas
We also have for $z\in E_1$, $$|1-\langle z,a\rangle |\ge
1-|a|>\frac{1}{2}(1-|a|).$$
Let us put $$K_a(z)=\frac{(1-|a|^2)^n}{|1-\langle a,z\rangle
|^{2n}}.$$
Using the above estimates and putting $\varepsilon=1$ if $\Phi\in \mathscr U$, and $\varepsilon=p$ if $\Phi\in \mathscr L$ is of lower type $0<p<1$,  we
obtain \Beas L &:=& \int_{\mathbb{B}^n}\Phi\left(\frac{(1-|a|^2)^{n}}{|1-\langle a,z\rangle|^{2n}}\right)d\mu(z)\\ &=&\int_{\mathbb B^n}\Phi(K_{a}(z))d\mu(z)\\ &\le&
C\sum_{k=1}^{N}\frac{\Phi\left(\frac{1}{2^{2(k-2)n}(1-  |a|)^n}\right)}{\Phi\left(\frac{1}{2^{(k-1)n}(1-|a|)^n}\right)}\\
&\le&
C\sum_{k=1}^{N}\frac{1}{2^{kn\varepsilon}}<\tilde{C}<\infty.\Eeas

$(ii)\Rightarrow (i):$ Let $a\neq 0$, $a\in \mathbb B^n$. Set $\delta=1-|a|^2$ and $\xi=a/|a|$. We remark that for $z\in Q_\delta(\xi)$,
$|1-\langle z,a\rangle|\le 2(1-|a|^2)$. Hence, using (\ref{uppertype}) for $\Phi\in \mathscr U$ and the fact that the function $\frac{\Phi(t)}{t}$ is nonincreasing for $\Phi\in \mathscr L$, we obtain
\begin{eqnarray*}
\mu(Q_\delta(\xi))\Phi(\frac{1}{\delta^n}) &\lesssim& \int_{\mathbb{B}^n}\Phi\left(\frac{(1-|a|^2)^{n}}{|1-\langle a,z\rangle|^{2n}}\right)d\mu(z)\\ &\le& C.
\end{eqnarray*}
The proof is complete.
\end{proof}

\begin{proof}[Proof of Theorem \ref{thm:main2}] We observe that the implication $(b)\Rightarrow (a)$ follows the same way as in the proof of the implication $(ii)\Rightarrow (i)$ in Theorem \ref{thm:main1}. We will then only prove that $(a)\Rightarrow (c)\Rightarrow (b)$.

$(a)\Rightarrow (c)$: We fix $\frac{1}{2}<r<1$ and $z\in \mathbb B^n$. We recall that
by Lemma \ref{deltacarlregion}, $\Delta(z,r)\subset Q_\delta(\xi)$ for some $\xi\in \mathbb S^n$ and $\delta >0$ with $\delta\backsimeq 1-|z|^2$. Under $(a)$, this implies that
\begin{equation}\label{muDeltaQ}
\mu(\Delta(z,r))\le \mu(Q_\delta(\xi))\leq \frac{C_1}{\Phi_2\circ \Phi_1^{-1}\left(\frac{1}{(1-|z|^2)^{n+1+\alpha}}\right)}.
\end{equation}

 Next,  we recall that  if $\Phi\in \mathscr L$ with lower type $0<p<1$, then the growth $\Phi_p(t)=\Phi(t^{1/p})$ belongs to $\mathscr U$ (see \cite{sehbatchoundja}). We will also use the notation $\Phi_p$ for $\Phi\in \mathscr U$ noting that in this case $p=1$. 
As $|f|^p$ ($0<p\leq 1$) is $\mathcal{M}-subharmonic$, we obtain using inequality (4.3) of \cite{Stoll} and the convexity of $\Phi_p$, that
\begin{eqnarray*}
\Phi(|f(z)|) &=& \Phi_p(|f(z)|^p)\\ &\le& C_2\int_{\Delta(z,\frac{1}{2})}\Phi_p(|f(w)|^p)(1-|w|^2)^{-(n+1+\alpha)}d\nu_\alpha(w)\\ &=& C_2\int_{\Delta(z,\frac{1}{2})}\Phi(|f(w)|)(1-|w|^2)^{-(n+1+\alpha)}d\nu_\alpha(w).
\end{eqnarray*}

Let $K_1=\max\{1,C_1C_2,(CC_1C_2)^{1/p}\}$, where $C$ is the constant in (\ref{lowertype}), $C_1$ the constant in (\ref{muDeltaQ}) and $C_2$ the constant in the last inequality. Put $K=\max\{C', K_1\}$ where $C'$ is the constant in (\ref{pointwisebergequat}). Using Lemma \ref{pointwiseberg}, Fubini's lemma , estimate (\ref{muDeltaQ}), and that the function $\frac{\Phi_2}{\Phi_1}$ is nondecreasing, we obtain
\begin{eqnarray*}
L &:=& \int_{\mathbb B^n}\Phi_2\left(\frac{|f(z)|}{K||f||_{\Phi_1,\alpha}^{lux}}\right)d\mu(z)\\ &\leq& C_2\int_{\mathbb B^n}d\mu(z) \int_{\Delta(z,\frac{1}{2})}\Phi_2\left(\frac{|f(w)|}{K||f||_{\Phi_1,\alpha}^{lux}}\right)(1-|w|^2)^{-(n+1+\alpha)}d\nu_\alpha(w)\\ &\leq& C_2\int_{\mathbb B^n}\left(\int_{\mathbb B^n}\chi_{\Delta(z,\frac{1}{2})}(w)d\mu(z)\right) \Phi_2\left(\frac{|f(w)|}{K||f||_{\Phi_1,\alpha}^{lux}}\right)(1-|w|^2)^{-n-1}d\nu(w)\\ &\leq&
C_2\int_{\mathbb B^n}\Phi_2\left(\frac{|f(w)|}{K||f||_{\Phi_1,\alpha}^{lux}}\right)(1-|w|^2)^{-n-1}\mu(\Delta(w,r))d\nu(w)\\
&\leq& C_2\int_{\mathbb B^n}\Phi_1\left(\frac{|f(w)|}{K||f||_{\Phi_1,\alpha}^{lux}}\right)\frac{\Phi_2\left(\frac{|f(w)|}{K||f||_{\Phi_1,\alpha}^{lux}}\right)}{\Phi_1\left(\frac{|f(w)|}{K||f||_{\Phi_1,\alpha}^{lux}}\right)}(1-|w|^2)^{-n-1}\mu(\Delta(w,r))d\nu(w)\\
&\leq& C_2\int_{\mathbb B^n}\Phi_1\left(\frac{|f(w)|}{K||f||_{\Phi_1,\alpha}^{lux}}\right)\frac{\Phi_2\circ \Phi_1^{-1}\left(\frac{1}{(1-|w|^2)^{n+1+\alpha}}\right)}{\Phi_1\circ \Phi_1^{-1}\left(\frac{1}{(1-|w|^2)^{n+1+\alpha}}\right)}\\ & & (1-|w|^2)^{-n-1}\mu(\Delta(w,r))d\nu(w)\\ &\leq& C_1C_2\int_{\mathbb B^n}\Phi_1\left(\frac{|f(w)|}{K||f||_{\Phi_1,\alpha}^{lux}}\right)d\nu_\alpha(w)\\ &\leq& \int_{\mathbb B^n}\Phi_1\left(\frac{|f(w)|}{||f||_{\Phi_1,\alpha}^{lux}}\right)d\nu_\alpha(w)\le 1.
\end{eqnarray*}
Where we used the fact that $\chi_{\Delta(z,\frac{1}{2})}(w)\le \chi_{\Delta(w,r)}(z)$ for each $z\in \mathbb B^n$ and $w\in \mathbb B^n$. 

$(c)\Rightarrow (b)$: Let $a\in \mathbb{B}^n$. Recall with Lemma \ref{testfunctionberg} that the function $$f_a(z)=\Phi_1^{-1}\left(\frac{1}{(1-|a|)^{n+1+\alpha}}\right)\left(\frac{1-|a|^2}{1-\langle z,a\rangle}\right)^{2(n+1+\alpha)}$$ is uniformly in $A_\alpha^{\Phi_1}(\mathbb{B}^n)$. Thus the implication follows by testing $(c)$ with $f_a$ and using the monotonicity of $\Phi$ or the monotonicity of the function $\frac{\Phi(t)}{t}$. 
The proof is complete.
\end{proof}

\section{Embeddings and multipliers between Bergman-Orlicz spaces}
We start this section with an embedding result between Bergman-Orlicz spaces.
\begin{theorem}\label{thm:bergorlbergorlembed}
Let $\Phi_1,\Phi_2\in \mathscr L\cup \mathscr U$, $\alpha,\beta>-1$. Assume that $\frac{\Phi_2}{\Phi_1}$ is non-decreasing. Then $A_\alpha^{\Phi_1}(\mathbb{B}^n)$ embeds continuously into $A_\beta^{\Phi_2}(\mathbb{B}^n)$ if and only if there is a constant $C>0$ such that 
\begin{equation}\label{eq:embedcond}
\Phi_1^{-1}(t^{n+1+\alpha})\leq \Phi_2^{-1}(Ct^{n+1+\beta}),\,\,\,\textrm{for}\,\,\, t\in [1,\infty).
\end{equation}
\end{theorem}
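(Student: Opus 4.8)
The plan is to apply Theorem~\ref{thm:main2} to the specific positive measure $d\mu=d\nu_\beta$. The first observation is that the continuous embedding $A_\alpha^{\Phi_1}(\mathbb B^n)\hookrightarrow A_\beta^{\Phi_2}(\mathbb B^n)$ is nothing but the assertion that $\nu_\beta$ is a $\Phi_2$-Carleson measure for $A_\alpha^{\Phi_1}(\mathbb B^n)$, i.e. assertion $(c)$ of Theorem~\ref{thm:main2} with $\mu=\nu_\beta$. Indeed, by the very definition of the Luxemburg (quasi)-norm as an infimum, the inequality $\int_{\mathbb B^n}\Phi_2\left(\frac{|f(z)|}{C_3\|f\|_{\Phi_1,\alpha}^{lux}}\right)d\nu_\beta(z)\le 1$ holds for all $f\neq 0$ if and only if $\|f\|_{\Phi_2,\beta}^{lux}\le C_3\|f\|_{\Phi_1,\alpha}^{lux}$ for all $f$. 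The forward direction is immediate, since the value $\lambda=C_3\|f\|_{\Phi_1,\alpha}^{lux}$ then lies in the defining set of the infimum; the converse uses that $\int_{\mathbb B^n}\Phi_2(|f|/\|f\|_{\Phi_2,\beta}^{lux})\,d\nu_\beta\le 1$ (valid by continuity of $\Phi_2$) together with the monotonicity of $\Phi_2$.

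Since $\frac{\Phi_2}{\Phi_1}$ is non-decreasing by hypothesis, Theorem~\ref{thm:main2} supplies the equivalence of $(c)$ with assertion $(a)$, namely $\nu_\beta(Q_\delta(\xi))\le C_1\big/\Phi_2\circ\Phi_1^{-1}(\delta^{-(n+1+\alpha)})$ for all $\xi\in\mathbb S^n$ and $0<\delta<1$. The next step is to insert the standard volume estimate for weighted Carleson tubes, $\nu_\beta(Q_\delta(\xi))\backsimeq \delta^{n+1+\beta}$, which holds uniformly in $\xi$ (note it involves $\beta$, while the right-hand side of $(a)$ involves $\alpha$, which is exactly what produces the asymmetry in (\ref{eq:embedcond})). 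After this substitution, assertion $(a)$ becomes $\Phi_2\circ\Phi_1^{-1}(\delta^{-(n+1+\alpha)})\lesssim \delta^{-(n+1+\beta)}$, uniformly in $\delta\in(0,1)$.

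Finally I would translate this into (\ref{eq:embedcond}) by the change of variable $t=1/\delta\in[1,\infty)$ and the application of $\Phi_2^{-1}$. Writing the last display as $\Phi_2\big(\Phi_1^{-1}(t^{n+1+\alpha})\big)\le C\,t^{n+1+\beta}$ and applying the increasing function $\Phi_2^{-1}$ yields precisely $\Phi_1^{-1}(t^{n+1+\alpha})\le\Phi_2^{-1}(Ct^{n+1+\beta})$; conversely, applying $\Phi_2$ to (\ref{eq:embedcond}) reverses every step, so the two conditions are equivalent and both implications of the theorem follow from the chain (embedding)~$\Leftrightarrow$~$(c)$~$\Leftrightarrow$~$(a)$~$\Leftrightarrow$~(\ref{eq:embedcond}). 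Since Theorem~\ref{thm:main2} carries the analytic weight, the only genuinely delicate points are the clean identification of the continuous embedding with assertion $(c)$ and the bookkeeping with $\Phi_2\circ\Phi_1^{-1}$. For the latter one should record that functions in $\mathscr L\cup\mathscr U$ may be taken strictly increasing (they are $\mathscr C^1$ with $\Phi'\backsimeq\Phi/t$), so that $\Phi_1^{-1}$ and $\Phi_2^{-1}$ are genuine inverses and the manipulations above are fully justified.
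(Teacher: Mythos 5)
Your proof is correct and follows essentially the same route as the paper: identify the continuous embedding with condition (c) of Theorem \ref{thm:main2} for $\mu=\nu_\beta$, invoke that theorem to pass to the Carleson-tube condition (a), and convert that condition into (\ref{eq:embedcond}) using the two-sided estimate $\nu_\beta(Q_\delta(\xi))\backsimeq\delta^{n+1+\beta}$ together with the monotonicity of $\Phi_2$ and $\Phi_2^{-1}$. The only (harmless) difference is in the necessity direction, where the paper detours through condition (b) of Theorem \ref{thm:main2} and then restricts to a tube, whereas you combine (a) directly with the lower volume bound --- a slight streamlining of the same argument.
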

\begin{proof}
That $A_\alpha^{\Phi_1}(\mathbb{B}^n)$ embeds continuously into $A_\beta^{\Phi_2}(\mathbb{B}^n)$ is equivalent in saying that there exists a constant $C>0$ such that for every $f\in A_\alpha^{\Phi_1}(\mathbb{B}^n)$, with $||f||_{\Phi_1,\alpha}^{lux}\neq 0$,
\begin{equation}\label{eq:equivembed}
\int_{\mathbb B^n}\Phi_2\left(\frac{|f(z)|}{C||f||_{\Phi_1,\alpha}^{lux}}\right)d\nu_\beta(z)\leq 1
\end{equation}
which is equivalent by Theorem \ref{thm:main2} in saying that $\nu_\beta$ is a $(\Phi_2\circ \Phi_1^{-1},\alpha)$-Carleson measure. Hence we only have to prove that $\nu_\beta$ is a $(\Phi_2\circ \Phi_1^{-1}, \alpha)$-Carleson measure if and only if (\ref{eq:embedcond}) holds.

Let us first assume that (\ref{eq:embedcond}) holds. Then we easily obtain for any $\xi\in \mathbb{S}^n$ and any $0<\delta<1$, that
\begin{equation}\label{eq:nubetacarl}
\nu_\beta(Q_\delta(\xi)) \sim \delta^{n+1+\beta}=\frac{C}{\Phi_2\circ \Phi_2^{-1}(\frac{C}{\delta^{n+1+\beta}})}\leq \frac{C}{\Phi_2\circ \Phi_1^{-1}(\frac{1}{\delta^{n+1+\alpha}})}.
\end{equation}
That is $\nu_\beta$ is a $(\Phi_2\circ \Phi_1^{-1}, \alpha)$-Carleson measure.

Now assume that $\nu_\beta$ is a $(\Phi_2\circ \Phi_1^{-1}, \alpha)$-Carleson measure. Then by Theorem \ref{thm:main2} there exists a constant $K>0$ such that
\begin{equation}\label{eq:repphiphi}\int_{\mathbb{B}^n}\Phi_2\left(\Phi_1^{-1}\left(\frac{1}{(1-|a|^2)^{n+1+\alpha}}\right)\frac{(1-|a|^2)^{2(n+1+\alpha)}}{|1-\langle a,z\rangle|^{2(n+1+\alpha)}}\right)d\nu_\beta(z)\leq K
\end{equation}
for all $a\in \mathbb{B}^n$.
\vskip .2cm
For $a\in \mathbb{B}^n$ given, let $\delta=1-|a|^2$ and $\xi=\frac{a}{|a|}$ ($a \neq 0$). Then using the type of $\Phi_2$ or the monotonicity of $\frac{\Phi_2(t)}{t}$, we obtain from (\ref{eq:repphiphi}) that 
$$\delta^{n+1+\beta}\Phi_2\circ \Phi_1^{-1}\left(\frac{1}{\delta^{n+1+\alpha}}\right)\backsimeq \nu_\beta(Q_\delta(\xi))\Phi_2\circ \Phi_1^{-1}\left(\frac{1}{\delta^{n+1+\alpha}}\right)\leq K,$$
that is 
$$\Phi_1^{-1}\left(\frac{1}{\delta^{n+1+\alpha}}\right)\leq \Phi_2^{-1}\left(\frac{C}{\delta^{n+1+\beta}}\right)$$
for some constant $C>0$. Thus (\ref{eq:embedcond}) holds. The proof is complete.
\end{proof}

We next consider multipliers between two Bergman-Orlicz spaces. We start by the following general result.
\begin{theorem}\label{thm:multiplbergorliczberg1}
Let $\Phi_1, \Phi_2\in \mathscr L\cup \mathscr U$. Assume that $\frac{\Phi_2}{\Phi_1}$ is nondecreasing. Let $\alpha, \beta>-1$ and define for $1\leq t<\infty$, the function
$$\gamma(t)=\frac{\Phi_2^{-1}(t^{n+1+\beta})}{\Phi_1^{-1}(t^{n+1+\alpha})}.$$
Then the following assertions hold.
\begin{itemize}
\item[(i)] If $\gamma$ is equivalent to $1$, then $\mathcal{M}\left(A_\alpha^{\Phi_1}(\mathbb{B}^n),A_\beta^{\Phi_2}(\mathbb{B}^n)\right)=H^\infty(\mathbb{B}^n)$.
\item[(ii)] If $\gamma$ is nonincreasing on $[1,\infty)$ and $\lim_{t\rightarrow \infty}\gamma(t)=0$, then $$\mathcal{M}\left(A_\alpha^{\Phi_1}(\mathbb{B}^n),A_\beta^{\Phi_2}(\mathbb{B}^n)\right)=\{0\}.$$
\end{itemize}
\end{theorem}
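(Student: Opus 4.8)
The plan is to derive a sharp pointwise bound on an arbitrary multiplier and then read off both cases directly from the behaviour of $\gamma$. The core estimate I would establish is that if $g\in\mathcal{M}\left(A_\alpha^{\Phi_1}(\mathbb{B}^n),A_\beta^{\Phi_2}(\mathbb{B}^n)\right)$, then
$$|g(a)|\lesssim \gamma\left(\frac{1}{1-|a|^2}\right)\qquad(a\in\mathbb{B}^n).$$
To obtain this I would test the multiplier against the family $f_a$ of Lemma \ref{testfunctionberg}, which satisfies $||f_a||_{\Phi_1,\alpha}^{lux}\lesssim 1$. The multiplier inequality then gives $||f_a g||_{\Phi_2,\beta}^{lux}\lesssim 1$, and applying the pointwise growth estimate of Lemma \ref{pointwiseberg} to $f_a g\in A_\beta^{\Phi_2}(\mathbb{B}^n)$ at the point $w=a$ yields
$$|f_a(a)|\,|g(a)|\lesssim \Phi_2^{-1}\left(\frac{1}{(1-|a|^2)^{n+1+\beta}}\right).$$
Since $f_a(a)=\Phi_1^{-1}\bigl((1-|a|)^{-(n+1+\alpha)}\bigr)$ and $1-|a|\backsimeq 1-|a|^2$, the type condition on $\Phi_1^{-1}$ lets me replace $1-|a|$ by $1-|a|^2$ in the denominator, and the quotient becomes exactly $\gamma(1/(1-|a|^2))$.

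For part (i), when $\gamma\backsimeq 1$ the core estimate gives $|g(a)|\lesssim 1$, so every multiplier lies in $H^\infty(\mathbb{B}^n)$. For the reverse inclusion I would first note that $\gamma\backsimeq 1$ forces the embedding condition (\ref{eq:embedcond}): the lower bound $\Phi_2^{-1}(t^{n+1+\beta})\gtrsim\Phi_1^{-1}(t^{n+1+\alpha})$ reads $\Phi_1^{-1}(t^{n+1+\alpha})\le\Phi_2^{-1}(Ct^{n+1+\beta})$ once the multiplicative constant is absorbed into a dilation of the argument of $\Phi_2^{-1}$ through the type condition of $\Phi_2$. Theorem \ref{thm:bergorlbergorlembed} then yields the continuous embedding $A_\alpha^{\Phi_1}(\mathbb{B}^n)\hookrightarrow A_\beta^{\Phi_2}(\mathbb{B}^n)$. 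Given $g\in H^\infty(\mathbb{B}^n)$ and $f\in A_\alpha^{\Phi_1}(\mathbb{B}^n)$, the pointwise bound $|fg|\le||g||_\infty|f|$ together with the definition of the Luxemburg norm gives $||fg||_{\Phi_2,\beta}^{lux}\le||g||_\infty\,||f||_{\Phi_2,\beta}^{lux}\lesssim||g||_\infty\,||f||_{\Phi_1,\alpha}^{lux}$, so $g$ is a multiplier. This establishes $\mathcal{M}\left(A_\alpha^{\Phi_1}(\mathbb{B}^n),A_\beta^{\Phi_2}(\mathbb{B}^n)\right)=H^\infty(\mathbb{B}^n)$.

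For part (ii), when $\gamma$ is nonincreasing on $[1,\infty)$ with $\lim_{t\to\infty}\gamma(t)=0$, the core estimate gives $|g(a)|\lesssim\gamma(1/(1-|a|^2))\to 0$ as $|a|\to 1^-$. Thus any multiplier $g$ tends uniformly to $0$ at the boundary, and the maximum modulus principle (comparing $|g|$ on the spheres $|z|=\rho$ and letting $\rho\to 1$) forces $g\equiv 0$; since $0$ is trivially a multiplier, $\mathcal{M}\left(A_\alpha^{\Phi_1}(\mathbb{B}^n),A_\beta^{\Phi_2}(\mathbb{B}^n)\right)=\{0\}$.

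I expect the main obstacle to be the clean derivation of the pointwise bound in the first step, specifically the bookkeeping needed to pass from $1-|a|$ to $1-|a|^2$ and to fit the resulting quotient precisely into the form $\gamma(1/(1-|a|^2))$. A secondary technical point is the constant-absorption argument showing $\gamma\backsimeq 1$ implies (\ref{eq:embedcond}), which again hinges on converting a multiplicative constant acting on $\Phi_2^{-1}$ into a dilation of its argument via the relevant type condition.
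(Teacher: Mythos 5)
Your proposal is correct and takes essentially the same route as the paper's own proof: both hinge on testing the multiplier against the functions $f_a$ of Lemma \ref{testfunctionberg} and applying the pointwise estimate of Lemma \ref{pointwiseberg} to $f_ag$ to obtain $|g(a)|\lesssim \gamma\left(\frac{1}{1-|a|^2}\right)$, then invoking the embedding Theorem \ref{thm:bergorlbergorlembed} for the sufficiency in (i) and the decay of $\gamma$ to force $g\equiv 0$ in (ii). The only differences are that you spell out two steps the paper leaves implicit---absorbing the multiplicative constant from $\gamma\backsimeq 1$ into the argument of $\Phi_2^{-1}$ via the type condition so that (\ref{eq:embedcond}) applies, and the maximum-modulus argument concluding $g\equiv 0$---both of which are correct (and for passing from $1-|a|$ to $1-|a|^2$ mere monotonicity of $\Phi_1^{-1}$ already suffices).
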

\begin{proof}
Let us start by the proving assertion $(i)$. We first prove the sufficiency. Let us assume that $g\in H^\infty(\mathbb{B}^n)$. Then for any $f\in A_\alpha^{\Phi_1}(\mathbb{B}^n)$, we have that
\Beas
\int_{\mathbb{B}^n}\Phi_2\left(\frac{|g(z)||f(z)|}{C\|g\|_\infty\|f\|_{\Phi_1, \alpha}^{lux}}\right)d\nu_\beta(z) &\leq& \int_{\mathbb{B}^n}\Phi_2\left(\frac{|f(z)|}{C\|f\|_{\Phi_1, \alpha}^{lux}}\right)d\nu_\beta(z)\\ &\leq& 1
\Eeas
where the last inequality follows from Theorem \ref{thm:bergorlbergorlembed}, with $C$ an appropriate constant.

Now assume that $g\in \mathcal{M}\left(A_\alpha^{\Phi_1}(\mathbb{B}^n),A_\beta^{\Phi_2}(\mathbb{B}^n)\right)$. Then by Lemma \ref{pointwiseberg} , there exists a constant $C>0$ such that for any $f\in A_\alpha^{\Phi_1}(\mathbb{B}^n)$ and any $z\in \mathbb{B}^n$,
\begin{equation}\label{eq:multopointwise}
|g(z)|f(z)||=|f(z)g(z)|\leq C\Phi_2^{-1}\left(\frac{1}{(1-|z|^2)^{n+1+\beta}}\right)\|f\|_{\Phi_2.\beta}^{lux}.
\end{equation}
Taking in the above inequality $f(z)=f_a(z)=\Phi_1^{-1}\left(\frac{1}{(1-|a|)^{n+1+\alpha}}\right)\left(\frac{1-|a|^2}{1-\langle z,a\rangle}\right)^{2(n+1+\alpha)}$ with $a\in \mathbb{B}^n$ fixed, we obtain for the same constant in (\ref{eq:multopointwise}) that for any $z\in \mathbb{B}^n$,
\begin{equation}\label{eq:multipointwise1}
|g(z)|\Phi_1^{-1}\left(\frac{1}{(1-|a|)^{n+1+\alpha}}\right)\left(\frac{1-|a|^2}{|1-\langle z,a\rangle|}\right)^{2(n+1+\alpha)}\leq C\Phi_2^{-1}\left(\frac{1}{(1-|z|^2)^{n+1+\beta}}\right).
\end{equation}
Taking in particular $z=a$ in the last equation, we obtain that for any $a\in \mathbb{B}^n$,$$|g(a)|\lesssim C.$$
Hence, $g\in H^\infty(\mathbb{B}^n)$. The proof of assertion $(i)$ is complete.
\vskip .2cm
Proof of $(ii):$ It obvious that $0$ is an element of $\mathcal{M}\left(A_\alpha^{\Phi_1}(\mathbb{B}^n),A_\beta^{\Phi_2}(\mathbb{B}^n)\right)$. Next, assume that $g\in \mathcal{M}\left(A_\alpha^{\Phi_1}(\mathbb{B}^n),A_\beta^{\Phi_2}(\mathbb{B}^n)\right)$, then as above, there is a constant $C>0$ such that for any $f\in A_\alpha^{\Phi_1}(\mathbb{B}^n)$ and any $z\in \mathbb{B}^n$, (\ref{eq:multopointwise}) holds. Testing (\ref{eq:multopointwise}) with 
$f(z)=f_a(z)=\Phi_1^{-1}\left(\frac{1}{(1-|a|)^{n+1+\alpha}}\right)\left(\frac{1-|a|^2}{1-\langle z,a\rangle}\right)^{2(n+1+\alpha)}$ with $a\in \mathbb{B}^n$ fixed, and then taking $z=a$, we obtain that for any $a\in \mathbb{B}^n$,
$$|f(a)|\lesssim C\frac{\Phi_2^{-1}\left(\frac{1}{(1-|a|^2)^{n+1+\beta}}\right)}{\Phi_1^{-1}\left(\frac{1}{(1-|a|)^{n+1+\alpha}}\right)}=\gamma\left(\frac{1}{1-|a|^2}\right).
$$
From the hypotheses on the function $\gamma$, we have that the right hand side of the last inequality goes to $0$ as $|a|\rightarrow 1$. Hence $f(a)=0$ for all $a\in \mathbb{B}^n$. The proof is complete.
\end{proof}
We finish with the following restriction to target growth functions in $\tilde{\mathscr L}\cup \tilde{\mathscr U}$.
\begin{theorem}\label{thm:multiplbergorliczberg2}
Let $\Phi_1\in \mathscr L\cup \mathscr U$ and $\Phi_2\in \tilde{\mathscr U}$. Assume that $\frac{\Phi_2}{\Phi_1}$ is nondecreasing. Let $\alpha, \beta>-1$ and define for $1\leq t<\infty$, the function
$$\gamma(t)=\frac{\Phi_2^{-1}(t^{n+1+\beta})}{\Phi_1^{-1}(t^{n+1+\alpha})}.$$
Then if $\gamma$ is nondecreasing on $[1,\infty)$, then $\mathcal{M}\left(A_\alpha^{\Phi_1}(\mathbb{B}^n),A_\beta^{\Phi_2}(\mathbb{B}^n)\right)=H_\omega^\infty(\mathbb{B}^n)$, $\omega(s)=\gamma(\frac{1}{s})$.
\end{theorem}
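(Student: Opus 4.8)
The plan is to establish the two inclusions $\mathcal{M}\big(A_\alpha^{\Phi_1}(\mathbb{B}^n),A_\beta^{\Phi_2}(\mathbb{B}^n)\big)\subseteq H_\omega^\infty(\mathbb{B}^n)$ and $H_\omega^\infty(\mathbb{B}^n)\subseteq\mathcal{M}\big(A_\alpha^{\Phi_1}(\mathbb{B}^n),A_\beta^{\Phi_2}(\mathbb{B}^n)\big)$ separately, where throughout $\omega(s)=\gamma(1/s)$, so that $g\in H_\omega^\infty$ means $|g(z)|\lesssim\gamma\big(\tfrac{1}{1-|z|}\big)$. For the necessity I would argue exactly as in the proof of Theorem \ref{thm:multiplbergorliczberg1}: if $g$ is a multiplier then $\|fg\|_{\Phi_2,\beta}^{lux}\lesssim\|f\|_{\Phi_1,\alpha}^{lux}$, so Lemma \ref{pointwiseberg} applied to $fg\in A_\beta^{\Phi_2}$ gives $|f(z)g(z)|\lesssim\Phi_2^{-1}\big((1-|z|^2)^{-(n+1+\beta)}\big)\|f\|_{\Phi_1,\alpha}^{lux}$. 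Testing with the function $f_a$ of Lemma \ref{testfunctionberg} (for which $\|f_a\|_{\Phi_1,\alpha}^{lux}\lesssim 1$) and evaluating at $z=a$, where the factor $\big(\tfrac{1-|a|^2}{1-\langle a,a\rangle}\big)^{2(n+1+\alpha)}=1$, yields $|g(a)|\,\Phi_1^{-1}\big((1-|a|)^{-(n+1+\alpha)}\big)\lesssim\Phi_2^{-1}\big((1-|a|^2)^{-(n+1+\beta)}\big)$. Since $1-|a|^2\ge 1-|a|$ and $\Phi_2^{-1}$ is nondecreasing, the right-hand side is at most $\Phi_2^{-1}\big((1-|a|)^{-(n+1+\beta)}\big)$, so $|g(a)|\lesssim\gamma\big(\tfrac{1}{1-|a|}\big)=\omega(1-|a|)$, i.e. $g\in H_\omega^\infty$.

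The substance lies in the sufficiency $H_\omega^\infty\subseteq\mathcal{M}$, and the main obstacle is that, unlike the power case (where one reduces multiplication by $g$ to a weighted embedding by absorbing $|g|^q$ into the measure), the boundary weight $\gamma\big(\tfrac{1}{1-|z|}\big)$ carried by $g$ is not a power and cannot be pulled out of $\Phi_2$. This is precisely where the hypothesis $\Phi_2\in\tilde{\mathscr U}$, and in particular condition (\ref{eq:uppertypecondmulti2}), is needed. I would fix $g\in H_\omega^\infty$, normalize $\|f\|_{\Phi_1,\alpha}^{lux}=1$ (so that $\int_{\mathbb{B}^n}\Phi_1(|f|)\,d\nu_\alpha\le 1$), and seek a constant $C$ with $\int_{\mathbb{B}^n}\Phi_2\big(\tfrac{|fg|}{C}\big)\,d\nu_\beta\le 1$. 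Writing $t=(1-|z|^2)^{-1}$, $w=\Phi_1^{-1}(t^{n+1+\alpha})$ and $A=\Phi_2^{-1}(t^{n+1+\beta})$, so that $\gamma\backsimeq A/w$ after the elementary doubling estimate $\gamma\big(\tfrac{1}{1-|z|}\big)\lesssim\gamma\big(\tfrac{1}{1-|z|^2}\big)$ (which follows from $\gamma$ being nondecreasing together with the monotonicity and type properties of $\Phi_1^{-1},\Phi_2^{-1}$), the bound $|g|\le M\gamma$ gives $\tfrac{|fg|}{C}\le A\cdot\tfrac{(M/C)|f|}{w}=A/b$ with $b=w/\big((M/C)|f|\big)$. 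The key preparatory step is again Lemma \ref{pointwiseberg}, which gives $|f(z)|\le C'w$; hence choosing $C\ge MC'$ forces $b\ge 1$, while $A\ge 1$ because $t\ge 1$.

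With $A,b\ge 1$ I would apply (\ref{eq:uppertypecondmulti2}) to linearise the weight, obtaining $\Phi_2(A/b)\le C_2\Phi_2(A)/b^{q}=C_2\,t^{n+1+\beta}(M/C)^q|f|^q/w^q$. Integrating and using $t^{n+1+\beta}\,d\nu_\beta\backsimeq(1-|z|^2)^{-(n+1)}\,d\nu$ reduces the matter to controlling $(M/C)^q\int_{\mathbb{B}^n}(|f|/w)^q(1-|z|^2)^{-(n+1)}\,d\nu$. The last ingredient is the pointwise estimate $(|f|/w)^q\lesssim\Phi_1(|f|)/\Phi_1(w)$, valid on $0\le|f|\le C'w$: when $|f|\ge w$ it is immediate from $\Phi_1(|f|)\ge\Phi_1(w)$ and $|f|/w\le C'$, while for $|f|<w$ I would combine the monotonicity of $\Phi_2/\Phi_1$ (which gives $\Phi_1(|f|)/\Phi_1(w)\ge\Phi_2(|f|)/\Phi_2(w)$) with the upper type $q$ of $\Phi_2$ from (\ref{uppertype}) (which gives $\Phi_2(|f|)/\Phi_2(w)\gtrsim(|f|/w)^q$). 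Since $\Phi_1(w)=t^{n+1+\alpha}$, the weight $(1-|z|^2)^{-(n+1)}/\Phi_1(w)$ collapses to $(1-|z|^2)^\alpha$, so the integral is $\lesssim(M/C)^q\int_{\mathbb{B}^n}\Phi_1(|f|)\,d\nu_\alpha\le(M/C)^q$; taking $C$ large enough (and $\ge MC'$) makes this $\le 1$, whence $g\in\mathcal{M}$. I expect the remaining difficulties to be purely bookkeeping: tracking the comparability constants arising from $1-|z|\backsimeq 1-|z|^2$ and from the $\gamma$-doubling, and verifying the two-case pointwise estimate $(|f|/w)^q\lesssim\Phi_1(|f|)/\Phi_1(w)$ uniformly in $z$ and $f$.
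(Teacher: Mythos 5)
Your proof is correct, and while your necessity half coincides with the paper's (test the pointwise multiplier bound with the functions $f_a$ of Lemma \ref{testfunctionberg} and evaluate at $z=a$), your sufficiency half takes a genuinely different route. The paper first splits $\Phi_2\big(\gamma\cdot\tfrac{|f|}{KC\|f\|}\big)\le C_1\Phi_2(\gamma)\,\Phi_2\big(\tfrac{|f|}{KC\|f\|}\big)$ using condition (\ref{eq:uppertypecondmulti1}), bounds $\Phi_2(\gamma)$ via (\ref{eq:uppertypecondmulti2}) and the upper type, and is then left with an integral of $\Phi_2\big(\tfrac{|f|}{C\|f\|}\big)$ against the auxiliary measure $d\mu=\big(\Phi_2\circ\Phi_1^{-1}\big((1-|z|^2)^{-(n+1+\alpha)}\big)\big)^{-1}(1-|z|^2)^{-(n+1)}d\nu$; it disposes of this by checking on Bergman balls that $\mu$ is a $(\Phi_2\circ\Phi_1^{-1},\alpha)$-Carleson measure and invoking Theorem \ref{thm:main2}, (a)$\Rightarrow$(c). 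You instead apply (\ref{eq:uppertypecondmulti2}) once to the \emph{whole} product, taking $a=\Phi_2^{-1}(t^{n+1+\beta})$ and $b=Cw/(M|f|)$ with $t=(1-|z|^2)^{-1}$, $w=\Phi_1^{-1}(t^{n+1+\alpha})$, which is legitimate precisely because Lemma \ref{pointwiseberg} gives $b\ge 1$ once $C\ge MC'$; this linearizes the integrand to $t^{n+1+\beta}(|f|/w)^q$, and your two-case pointwise bound $(|f|/w)^q\lesssim\Phi_1(|f|)/\Phi_1(w)$ (on $|f|<w$: monotonicity of $\Phi_2/\Phi_1$ plus the upper type (\ref{uppertype}) of $\Phi_2$; on $w\le|f|\le C'w$: trivial) collapses everything to $\int_{\mathbb B^n}\Phi_1(|f|)\,d\nu_\alpha\le 1$. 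Your route buys three things: it never uses (\ref{eq:uppertypecondmulti1}), so it actually proves the theorem for any $\Phi_2\in\mathscr U^q$ satisfying (\ref{eq:uppertypecondmulti2}) alone, a weaker hypothesis than $\Phi_2\in\tilde{\mathscr U}$; it avoids Theorem \ref{thm:main2} and the Bergman-ball computation entirely, so it is self-contained; and it sidesteps a soft spot in the paper's use of (\ref{eq:uppertypecondmulti1}), whose hypothesis requires one factor to be $\le 1$ — not evident for $\tfrac{|f(z)|}{KC\|f\|}$ unless one injects the Lemma \ref{pointwiseberg} bound, the very normalization you build in from the start. What the paper's route buys in exchange is reusability: the reduction to a $(\Phi_2\circ\Phi_1^{-1},\alpha)$-Carleson measure is exactly what gets recycled in the proof of Theorem \ref{thm:multiplbergorliczberg3} for the class $\tilde{\mathscr L}$, whereas your argument is tailored to the upper-type case. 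Two details to nail down when writing yours up: the doubling step should be stated as $\gamma\big(\tfrac{1}{1-|z|}\big)\le\gamma\big(\tfrac{2}{1-|z|^2}\big)\le 2^{n+1+\beta}\gamma\big(\tfrac{1}{1-|z|^2}\big)$, the first inequality by monotonicity of $\gamma$ and the second from $\Phi_2^{-1}(\lambda s)\le\lambda\Phi_2^{-1}(s)$ for $\lambda\ge 1$ (a consequence of $t\mapsto\Phi_2(t)/t$ being nondecreasing); and the points where $f(z)=0$ should be noted separately, since there $b$ is undefined but the integrand vanishes.
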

\begin{proof}
That any function in $\mathcal{M}\left(A_\alpha^{\Phi_1}(\mathbb{B}^n),A_\beta^{\Phi_2}(\mathbb{B}^n)\right)$ is an element of $H_\omega^\infty(\mathbb{B}^n)$ can be proved following the same idea in the proof of the necessity in assertion $(ii)$ of the previous theorem. Let us prove that any $g\in H_\omega^\infty(\mathbb{B}^n)$ is an element of $\mathcal{M}\left(A_\alpha^{\Phi_1}(\mathbb{B}^n),A_\beta^{\Phi_2}(\mathbb{B}^n)\right)$.
\vskip .2cm
Using the properties of growth functions in $\tilde{\mathscr U}$, inequality (\ref{uppertype}), we obtain for $q\geq 1$ the upper-type of $\Phi_2$, $K=\max\{1,C_1C_2\}$ where $C_1$ and $C_2$ are respectively from conditions (\ref{eq:uppertypecondmulti1}) and (\ref{eq:uppertypecondmulti2}) in the definition of the class $\tilde{\mathscr U}$, and  for $C>0$ a constant which existence has to be proved,
\Beas
L &:=& \int_{\mathbb{B}^n}\Phi_2\left(\frac{|g(z)||f(z)|}{KC\|g\|_{H_\omega^\infty}\|f\|_{\Phi_1,\alpha}^{lux}}\right)d\nu_\beta(z)\\ &\leq& \int_{\mathbb{B}^n}\Phi_2\left(\frac{\Phi_2^{-1}\left(\frac{1}{(1-|z|^2)^{n+1+\beta}}\right)}{\Phi_1^{-1}\left(\frac{1}{(1-|z|^2)^{n+1+\alpha}}\right)}\frac{|f(z)|}{KC\|f\|_{\Phi_1,\alpha}^{lux}}\right)d\nu_\beta(z)\\ &\leq& C_1\int_{\mathbb{B}^n}\Phi_2\left(\frac{\Phi_2^{-1}\left(\frac{1}{(1-|z|^2)^{n+1+\beta}}\right)}{\Phi_1^{-1}\left(\frac{1}{(1-|z|^2)^{n+1+\alpha}}\right)}\right)\Phi_2\left(\frac{|f(z)|}{KC\|f\|_{\Phi_1,\alpha}^{lux}}\right)d\nu_\beta(z)\\ &\leq& C_1C_2\int_{\mathbb{B}^n}\frac{1}{(1-|z|^2)^{n+1+\beta}\left(\Phi_1^{-1}\left(\frac{1}{(1-|z|^2)^{n+1+\alpha}}\right)\right)^q}\Phi_2\left(\frac{|f(z)|}{KC\|f\|_{\Phi_1,\alpha}^{lux}}\right)d\nu_\beta(z)\\ &\leq& \int_{\mathbb{B}^n}\frac{1}{(1-|z|^2)^{n+1+\beta}\Phi_2\circ\Phi_1^{-1}\left(\frac{1}{(1-|z|^2)^{n+1+\alpha}}\right)}\Phi_2\left(\frac{|f(z)|}{C\|f\|_{\Phi_1,\alpha}^{lux}}\right)d\nu_\beta(z).
\Eeas
To conclude, we only have to prove the existence of a constant $C>0$ such that 
$$\int_{\mathbb{B}^n}\Phi_2\left(\frac{|f(z)|}{C\|f\|_{\Phi_1,\alpha}^{lux}}\right)d\mu(z)\leq 1$$ where $$d\mu(z)=\frac{1}{\Phi_2\circ\Phi_1^{-1}\left(\frac{1}{(1-|z|^2)^{n+1+\alpha}}\right)}(1-|z|^2)^{-(n+1)}d\nu(z).$$ For this, we know from Theorem \ref{thm:main2} that it is enough to prove that $\mu$ is a $(\Phi_2\circ \Phi_1^{-1}, \alpha)$-Carleson measure.
\vskip .2cm
Let $\rho(z,w)$ be the Bergman distance between $z$ and $w$. For $R>0$ and $a\in \mathbb{B}^n$, the Bergman ball centered at $a$ and of radius $R$ is the set $$D(a,R)=\{z\in \mathbb{B}^n: \rho(z,a)<R\}.$$
Following the proof of the implication $(a)\Rightarrow (c)$ in Theorem \ref{thm:main2}, we see that to prove that $\mu$ is a $(\Phi_2\circ \Phi_1^{-1}, \alpha)$-Carleson measure, it is enough to prove that for any $a\in \mathbb{B}^n$ and $R>0$, $$\mu(D(a,R))\leq C_R\frac{1}{\Phi_2\circ\Phi_1^{-1}\left(\frac{1}{(1-|a|^2)^{n+1+\alpha}}\right)}.$$
Using that for any $z\in D(a,R)$, $1-|z|^2\sim 1-|a|^2$ and that $\nu(D(a,R))\sim (1-|a|^2)^{n+1}$, we easily obtain
\Beas
\mu(D(a,R)) &=& \int_{D(a,R)}\frac{1}{\Phi_2\circ\Phi_1^{-1}\left(\frac{1}{(1-|z|^2)^{n+1+\alpha}}\right)}(1-|z|^2)^{-(n+1)}d\nu(z)\\ &\sim& \frac{1}{\Phi_2\circ\Phi_1^{-1}\left(\frac{1}{(1-|a|^2)^{n+1+\alpha}}\right)}(1-|a|^2)^{-(n+1)}\nu(D(a,R))\\ &\sim& \frac{1}{\Phi_2\circ\Phi_1^{-1}\left(\frac{1}{(1-|a|^2)^{n+1+\alpha}}\right)}.
\Eeas
The proof is complete.
\end{proof}
We also have the following result.
\begin{theorem}\label{thm:multiplbergorliczberg3}
Let $\Phi_1\in \mathscr L $ and $\Phi_2\in \tilde{\mathscr L}$. Assume that $\frac{\Phi_2}{\Phi_1}$ is nondecreasing. Let $\alpha, \beta>-1$ and define for $1\leq t<\infty$, the function
$$\gamma(t)=\frac{\Phi_2^{-1}(t^{n+1+\beta})}{\Phi_1^{-1}(t^{n+1+\alpha})}.$$
Then if $\gamma$ is nondecreasing on $[1,\infty)$, then $\mathcal{M}\left(A_\alpha^{\Phi_1}(\mathbb{B}^n),A_\beta^{\Phi_2}(\mathbb{B}^n)\right)=H_\omega^\infty(\mathbb{B}^n)$, $\omega(s)=\gamma(\frac{1}{s})$.
\end{theorem}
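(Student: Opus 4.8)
The plan is to follow the proof of Theorem~\ref{thm:multiplbergorliczberg2} line by line, since the present statement differs only in that the target growth function $\Phi_2$ is now taken in $\tilde{\mathscr L}$ instead of $\tilde{\mathscr U}$. The single point at which the two arguments genuinely part ways is the pointwise estimate of $\Phi_2\left(\frac{\Phi_2^{-1}\left(\frac{1}{(1-|z|^2)^{n+1+\beta}}\right)}{\Phi_1^{-1}\left(\frac{1}{(1-|z|^2)^{n+1+\alpha}}\right)}\right)$, where the $\tilde{\mathscr U}$-condition (\ref{eq:uppertypecondmulti2}) is replaced by the $\tilde{\mathscr L}$-condition (\ref{eq:lowertypecondmulti}); the goal is to check that the latter delivers exactly the same bound, after which the remainder of the proof is verbatim.

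First I would establish $\mathcal{M}\left(A_\alpha^{\Phi_1}(\mathbb{B}^n),A_\beta^{\Phi_2}(\mathbb{B}^n)\right)\subseteq H_\omega^\infty(\mathbb{B}^n)$ exactly as in the necessity half of Theorem~\ref{thm:multiplbergorliczberg1}(ii). For a multiplier $g$, Lemma~\ref{pointwiseberg} applied to $gf\in A_\beta^{\Phi_2}(\mathbb{B}^n)$ gives $|g(z)f(z)|\lesssim \Phi_2^{-1}\left(\frac{1}{(1-|z|^2)^{n+1+\beta}}\right)\|f\|_{\Phi_1,\alpha}^{lux}$; inserting the test function $f_a$ of Lemma~\ref{testfunctionberg} (for which $\|f_a\|_{\Phi_1,\alpha}^{lux}\lesssim 1$) and then setting $z=a$ yields $|g(a)|\lesssim \gamma\left(\frac{1}{1-|a|^2}\right)=\omega(1-|a|^2)$, so $g\in H_\omega^\infty(\mathbb{B}^n)$.

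The core of the argument is the reverse inclusion. Given $g\in H_\omega^\infty(\mathbb{B}^n)$ and $f\in A_\alpha^{\Phi_1}(\mathbb{B}^n)$, I would bound $|g(z)|\le \|g\|_{H_\omega^\infty}\omega(1-|z|)$, split $\Phi_2$ of the resulting product by means of (\ref{eq:uppertypecondmulti1}) exactly as in Theorem~\ref{thm:multiplbergorliczberg2}, and thereby reduce matters to estimating $\Phi_2\left(\frac{a}{b}\right)$ with $a=\Phi_2^{-1}\left(\frac{1}{(1-|z|^2)^{n+1+\beta}}\right)\ge 1$ and $b=\Phi_1^{-1}\left(\frac{1}{(1-|z|^2)^{n+1+\alpha}}\right)\ge 1$. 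Here I would apply (\ref{eq:lowertypecondmulti}) to get $\Phi_2\left(\frac{a}{b}\right)\le C_3\frac{a^p}{\Phi_2(b)}$, and then use the elementary inequality $t^p\le C\Phi_2(t)$ for $t\ge 1$ (valid since $\Phi_2\in\mathscr L^p$, with $C$ the constant in (\ref{lowertype})) to replace $a^p$ by $C\Phi_2(a)=C\frac{1}{(1-|z|^2)^{n+1+\beta}}$. Since $\Phi_2(b)=\Phi_2\circ\Phi_1^{-1}\left(\frac{1}{(1-|z|^2)^{n+1+\alpha}}\right)$, this reproduces precisely the bound $\frac{1}{(1-|z|^2)^{n+1+\beta}\,\Phi_2\circ\Phi_1^{-1}\left(\frac{1}{(1-|z|^2)^{n+1+\alpha}}\right)}$ (up to a constant) obtained in the $\tilde{\mathscr U}$ case.

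From this reduction the proof closes identically to Theorem~\ref{thm:multiplbergorliczberg2}: it remains to produce $C>0$ with $\int_{\mathbb{B}^n}\Phi_2\left(\frac{|f(z)|}{C\|f\|_{\Phi_1,\alpha}^{lux}}\right)d\mu(z)\le 1$, where $d\mu(z)=\left(\Phi_2\circ\Phi_1^{-1}\left(\frac{1}{(1-|z|^2)^{n+1+\alpha}}\right)\right)^{-1}(1-|z|^2)^{-(n+1)}d\nu(z)$, and by Theorem~\ref{thm:main2} this follows once $\mu$ is shown to be a $(\Phi_2\circ\Phi_1^{-1},\alpha)$-Carleson measure. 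That final fact is the Bergman-ball estimate $\mu(D(a,R))\sim\left(\Phi_2\circ\Phi_1^{-1}\left(\frac{1}{(1-|a|^2)^{n+1+\alpha}}\right)\right)^{-1}$, using $1-|z|^2\sim 1-|a|^2$ and $\nu(D(a,R))\sim(1-|a|^2)^{n+1}$ on $D(a,R)$. I expect no real obstacle beyond confirming that the $\tilde{\mathscr L}$ hypotheses—in particular that $\Phi_2\in\mathscr L^p$ furnishes $t^p\lesssim\Phi_2(t)$ and that $\frac{\Phi_2}{\Phi_1}$ nondecreasing is consistent with $\Phi_1\in\mathscr L$—feed into the chain so that the estimate matches the $\tilde{\mathscr U}$ bound term for term; once that is checked, everything downstream of the reduction is routine.
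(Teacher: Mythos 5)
Your proposal is correct and takes essentially the same route as the paper: necessity via the test functions of Lemma \ref{testfunctionberg}, sufficiency by splitting with (\ref{eq:uppertypecondmulti1}), estimating $\Phi_2(a/b)$ via (\ref{eq:lowertypecondmulti}), and reducing to the fact that $d\mu(z)=\left(\Phi_2\circ\Phi_1^{-1}\left(\tfrac{1}{(1-|z|^2)^{n+1+\alpha}}\right)\right)^{-1}(1-|z|^2)^{-(n+1)}d\nu(z)$ is a $(\Phi_2\circ\Phi_1^{-1},\alpha)$-Carleson measure via the Bergman-ball estimate and Theorem \ref{thm:main2}. Your replacement of $a^p$ by $C\Phi_2(a)$ using $t^p\leq C\Phi_2(t)$ for $t\geq 1$ is exactly the same estimate the paper phrases as ``$\Phi_2^{-1}\in\mathscr U^{1/p}$'' together with (\ref{uppertype}), so there is no substantive difference.
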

\begin{proof}
Again, that any function in $\mathcal{M}\left(A_\alpha^{\Phi_1}(\mathbb{B}^n),A_\beta^{\Phi_2}(\mathbb{B}^n)\right)$ is an element of $H_\omega^\infty(\mathbb{B}^n)$ can be proved following the same idea in the proof of Theorem \ref{thm:multiplbergorliczberg1}.
\vskip .2cm
Let us prove that any $g\in H_\omega^\infty(\mathbb{B}^n)$ is an element of $\mathcal{M}\left(A_\alpha^{\Phi_1}(\mathbb{B}^n),A_\beta^{\Phi_2}(\mathbb{B}^n)\right)$.
\vskip .2cm
Let $K=\max\{1,(CC'C_1C_3)^{1/p}\}$ where $C$ the constant in (\ref{lowertype}), $C'$ the constant in (\ref{uppertype}), $C_1$ the constant in (\ref{eq:uppertypecondmulti1}), and $C_3$ the constant in (\ref{eq:lowertypecondmulti}). Using the properties of growth functions in $\tilde{\mathscr L}$, the fact that as $\Phi_2\in \mathscr L^p$, $\Phi_2^{-1}\in \mathscr U^{1/p}$, and inequality (\ref{uppertype}), we obtain for $C>0$ a constant whcih existence has to be proved, that 
\Beas
L &:=& \int_{\mathbb{B}^n}\Phi_2\left(\frac{|g(z)||f(z)|}{KC\|g\|_{H_\omega^\infty}\|f\|_{\Phi_1,\alpha}^{lux}}\right)d\nu_\beta(z)\\ &\leq& C_1\int_{\mathbb{B}^n}\Phi_2\left(\frac{\Phi_2^{-1}\left(\frac{1}{(1-|z|^2)^{n+1+\beta}}\right)}{\Phi_1^{-1}\left(\frac{1}{(1-|z|^2)^{n+1+\alpha}}\right)}\right)\Phi_2\left(\frac{|f(z)|}{KC\|f\|_{\Phi_1,\alpha}^{lux}}\right)d\nu_\beta(z)\\ &\leq& C_1C_3\int_{\mathbb{B}^n}\frac{\left(\Phi_2^{-1}\left(\frac{1}{(1-|z|^2)^{n+1+\beta}}\right)\right)^p}{\Phi_2\left(\Phi_1^{-1}\left(\frac{1}{(1-|z|^2)^{n+1+\alpha}}\right)\right)}\Phi_2\left(\frac{|f(z)|}{KC\|f\|_{\Phi_1,\alpha}^{lux}}\right)d\nu_\beta(z)\\ &\leq& C_1C_3C'\int_{\mathbb{B}^n}\frac{1}{(1-|z|^2)^{n+1+\beta}\Phi_2\left(\Phi_1^{-1}\left(\frac{1}{(1-|z|^2)^{n+1+\alpha}}\right)\right)}\Phi_2\left(\frac{|f(z)|}{KC\|f\|_{\Phi_1,\alpha}^{lux}}\right)d\nu_\beta(z)\\ &\leq& \int_{\mathbb{B}^n}\Phi_2\left(\frac{|f(z)|}{C\|f\|_{\Phi_1,\alpha}^{lux}}\right)d\mu(z)
\Eeas
where $$d\mu(z)=\frac{1}{\Phi_2\circ\Phi_1^{-1}\left(\frac{1}{(1-|z|^2)^{n+1+\alpha}}\right)}(1-|z|^2)^{-(n+1)}d\nu(z).$$
Again, to conclude, we only have to prove the existence of a constant $C>0$ such that 
$$\int_{\mathbb{B}^n}\Phi_2\left(\frac{|f(z)|}{C\|f\|_{\Phi_1,\alpha}^{lux}}\right)d\mu(z)\leq 1.$$  
This follows as in the end of the proof of Theorem \ref{thm:multiplbergorliczberg2}. The proof is complete.
\end{proof}

Let us now introduce the following class of weight functions. We say a function $\omega:(0,1]\longrightarrow (0,\infty)$ belongs to the class $\Omega$, if $\omega$ is nonincreasing, $\frac{1}{\omega}$ is of some positive lower type and the function $t\mapsto t\omega(t)$ is increasing. 
\vskip .2cm

Let $f\in \mathcal H(\mathbb B^n)$. The radial derivative $Rf$ of $f$ is given by
$$Rf(z)=\sum_{j=1}^nz_j\frac{\partial f}{\partial z_j}(z).$$

An analytic function $f$ in $\mathbb B^n$ belongs to $\Lambda_\omega$ if $Rf\in \mathcal H_{t\omega}^\infty(\mathbb B^n)$, that is
$$\sup_{z\in \mathbb B^n}\frac{(1-|z|)|Rf(z)|}{\omega(1-|z|)}<\infty.$$
$\Lambda_\omega$ is a Banach space under
$$||f||_{\Lambda_\omega}:=|f(0)|+\sup_{z\in \mathbb B^n}\frac{(1-|z|)|Rf(z)|}{\omega(1-|z|)}.$$
Note that if $\omega(t)=t^{1-\lambda}$, $1\leq \lambda<\infty$, $\Lambda_\omega$ is just $\lambda$-Bloch space usually denoted $\mathcal{B}^\lambda$; $\mathcal{B}=\mathcal{B}^1$ being the Bloch space.

The following was proved in \cite{sehbastevic}.
\begin{proposition}\label{prop:sehbastevic} Suppose that $\omega\in \Omega$. Then $\mathcal H_\omega^\infty(\mathbb B^n)=\Lambda_\omega$ with equivalent norms.
\end{proposition}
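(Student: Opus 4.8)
The plan is to prove the two continuous inclusions $\Lambda_\omega\hookrightarrow\mathcal H_\omega^\infty(\mathbb B^n)$ and $\mathcal H_\omega^\infty(\mathbb B^n)\hookrightarrow\Lambda_\omega$ with comparable norms. Two elementary consequences of $\omega\in\Omega$ will be used throughout. First, combining that $\omega$ is nonincreasing with the fact that $t\mapsto t\omega(t)$ is increasing yields a doubling-type regularity: if $s\le s'\le 2s$ then $\tfrac12\omega(s)\le\omega(s')\le\omega(s)$, so $\omega(s)\backsimeq\omega(s')$ whenever $s\backsimeq s'$. Second, since $1/\omega$ is of some positive lower type $\sigma$, writing $s=2^k\delta$ and $t=2^{-k}$ in the lower type inequality gives $\omega(2^k\delta)\lesssim 2^{-k\sigma}\omega(\delta)$ for all $k\ge 0$ and $\delta\in(0,1]$ with $2^k\delta\le 1$; this is exactly the summability that controls the telescoping below.

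For the inclusion $\mathcal H_\omega^\infty\hookrightarrow\Lambda_\omega$ I would argue by a Cauchy estimate. If $\|f\|_{\mathcal H_\omega^\infty}<\infty$, then on the Euclidean ball $B(z,\tfrac{1-|z|}{2})\subset\mathbb B^n$ one has $1-|w|\backsimeq 1-|z|$, hence $\sup_{w\in B}|f(w)|\lesssim\|f\|_{\mathcal H_\omega^\infty}\,\omega(1-|z|)$ by the doubling regularity of $\omega$. The Cauchy inequalities for first order derivatives then give $|\partial_j f(z)|\lesssim(1-|z|)^{-1}\sup_B|f|$, so that $|Rf(z)|\le\sum_j|z_j|\,|\partial_j f(z)|\lesssim\frac{\omega(1-|z|)}{1-|z|}\|f\|_{\mathcal H_\omega^\infty}$; together with $|f(0)|\le\|f\|_{\mathcal H_\omega^\infty}\,\omega(1)$ this gives $\|f\|_{\Lambda_\omega}\lesssim\|f\|_{\mathcal H_\omega^\infty}$.

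For the reverse inclusion I would start from the identity $\frac{d}{dt}f(t\zeta)=\frac1t Rf(t\zeta)$, with $\zeta=z/|z|$, which integrates along the radius to $f(z)-f(z')=\int_{|z'|}^{|z|}\frac1t Rf(t\zeta)\,dt$. The naive bound $\int_0^1\frac{|Rf(tz)|}{t}\,dt$ diverges at the origin, so instead I would telescope in dyadic shells: writing $\delta=1-|z|$ and letting $z_k$ be the point of the same radius with $1-|z_k|=2^k\delta$, the estimate $|Rf(w)|\le\|f\|_{\Lambda_\omega}\frac{\omega(1-|w|)}{1-|w|}$ gives, as long as $2^k\delta\le\tfrac14$, the single step bound $|f(z_k)-f(z_{k+1})|\lesssim\|f\|_{\Lambda_\omega}\,\omega(2^k\delta)$, where the $\frac1t$ factor is harmless because $t$ stays bounded away from $0$ on the shell and $\omega(1-t)\backsimeq\omega(2^k\delta)$ there. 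Summing these and invoking $\omega(2^k\delta)\lesssim 2^{-k\sigma}\omega(\delta)$ collapses the telescoping sum to $\sum_k\omega(2^k\delta)\lesssim\omega(\delta)=\omega(1-|z|)$.

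The telescoping halts at a point $z_N$ lying in a fixed ball $\{|w|\le\rho_1\}$ with $\rho_1<1$, and bounding $|f(z_N)|$ there is the only real subtlety, since the radial integral still cannot be bounded termwise. Here I would expand $f=\sum_{m\ge 0}f_m$ into homogeneous parts, note that $\sup_{|v|=\rho_0}|m f_m(v)|\le\sup_{|v|=\rho_0}|Rf(v)|\lesssim\|f\|_{\Lambda_\omega}$ for a fixed $\rho_0\in(\rho_1,1)$, whence $\sup_{|v|=\rho}|f_m(v)|\lesssim\|f\|_{\Lambda_\omega}(\rho/\rho_0)^m/m$ for $\rho\le\rho_0$; summing the resulting geometric-harmonic series on $\{|w|\le\rho_1\}$ gives $\sup_{|w|\le\rho_1}|f(w)|\lesssim|f(0)|+\|f\|_{\Lambda_\omega}\lesssim\|f\|_{\Lambda_\omega}$. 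Since $\omega(1-|z|)\ge\omega(1)>0$, combining this interior bound with the telescoping estimate yields $|f(z)|\lesssim\|f\|_{\Lambda_\omega}\,\omega(1-|z|)$ for every $z$ (the case $|z|\le\rho_1$ being covered directly by the interior bound), that is $\|f\|_{\mathcal H_\omega^\infty}\lesssim\|f\|_{\Lambda_\omega}$, which completes the proof.
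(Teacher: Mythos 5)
The paper offers no proof of this proposition at all---it is quoted from \cite{sehbastevic}---so your argument can only be measured against the cited literature, not against an internal argument; on its own terms it is correct and complete. Your two preliminary observations are exactly the right ones: the monotonicity of $\omega$ together with that of $t\mapsto t\omega(t)$ gives the doubling property $\omega(s)\backsimeq\omega(s')$ for $s\backsimeq s'$, which is all that the inclusion $\mathcal H_\omega^\infty\hookrightarrow\Lambda_\omega$ (Cauchy estimates on $B(z,\tfrac{1-|z|}{2})$) requires, while the positive lower type of $1/\omega$ is what powers the converse, via $\omega(2^k\delta)\lesssim 2^{-k\sigma}\omega(\delta)$ and the convergent geometric series in your dyadic telescoping; this is precisely the mechanism that fails for $\omega\equiv 1$, consistent with $H^\infty\subsetneq\mathcal B$. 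The step bounds are right: on the shell where $1-t\in[2^k\delta,2^{k+1}\delta]$ with $2^{k+1}\delta\le\tfrac12$ one has $t\ge\tfrac12$, $\omega(1-t)\le\omega(2^k\delta)$, and the interval has length $2^k\delta$, giving $|f(z_k)-f(z_{k+1})|\lesssim\|f\|_{\Lambda_\omega}\omega(2^k\delta)$. Your treatment of the interior value $|f(z_N)|$ by homogeneous expansion is valid, since $mf_m(v)=\frac{1}{2\pi}\int_0^{2\pi}Rf(e^{i\theta}v)e^{-im\theta}\,d\theta$ justifies the bound $|mf_m(v)|\le\sup_{|u|=|v|}|Rf(u)|$; a slightly shorter route to the same estimate is to apply the Schwarz lemma to $Rf$ (which vanishes at the origin) on a fixed ball $\{|w|\le\rho_0\}$, yielding $|Rf(t\zeta)|\lesssim t\,\|f\|_{\Lambda_\omega}$ there, so that the radial integral from $0$ to $|z_N|$ converges termwise and no expansion is needed. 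Either way, both inequalities between the norms follow with constants depending only on $\omega$, $n$ and the fixed radii, so the proposal stands as a self-contained elementary proof of the proposition.
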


It follows in particular from Proposition \ref{prop:sehbastevic} and Theorem \ref{thm:main3} that we have the following.
\begin{corollary}
Let $0<p\leq q<\infty$, $\alpha,\beta>-1$. Define $\lambda=\frac{n+1+\beta}{q}-\frac{n+1+\alpha}{p}$. Then the following assertions hold.
\begin{itemize}
\item[(i)] If $\lambda>0$, then $\mathcal{M}\left(A_\alpha^{p}(\mathbb{B}^n),A_\beta^{q}(\mathbb{B}^n)\right)=\mathcal{B}^{\lambda+1}$.
\item[(ii)] If $\lambda=0$, then $\mathcal{M}\left(A_\alpha^{p}(\mathbb{B}^n),A_\beta^{q}(\mathbb{B}^n)\right)=H^\infty(\mathbb{B}^n)$.
\item[(ii)] If $\lambda<0$, then $\mathcal{M}\left(A_\alpha^{p}(\mathbb{B}^n),A_\beta^{q}(\mathbb{B}^n)\right)=\{0\}$.
\end{itemize}
\end{corollary}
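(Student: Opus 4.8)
The plan is to obtain the corollary as the specialization of Theorem \ref{thm:main3} to the power functions $\Phi_1(t)=t^p$ and $\Phi_2(t)=t^q$, and then to convert the weighted growth space produced by that theorem into a Bloch-type space by means of Proposition \ref{prop:sehbastevic}.

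First I would verify the hypotheses of Theorem \ref{thm:main3}. Since $0<p\le q<\infty$, each power function $t^p$ and $t^q$ lies in $\mathscr L\cup\mathscr U$ (concave of lower type when the exponent is $\le 1$, convex of upper type when it is $\ge 1$); in particular $t^q$ belongs to $\tilde{\mathscr L}$ when $q\le1$ and to $\tilde{\mathscr U}$ when $q\ge1$, so $\Phi_2\in\tilde{\mathscr L}\cup\tilde{\mathscr U}$. Moreover $\Phi_2/\Phi_1=t^{q-p}$ is nondecreasing because $p\le q$. Using $\Phi_1^{-1}(t)=t^{1/p}$ and $\Phi_2^{-1}(t)=t^{1/q}$, the auxiliary function of Theorem \ref{thm:main3} becomes
\begin{equation*}
\gamma(t)=\frac{\Phi_2^{-1}(t^{n+1+\beta})}{\Phi_1^{-1}(t^{n+1+\alpha})}=t^{\frac{n+1+\beta}{q}-\frac{n+1+\alpha}{p}}=t^{\lambda}.
\end{equation*}

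Next I would run the trichotomy of Theorem \ref{thm:main3} according to the sign of $\lambda$. If $\lambda<0$, then $\gamma(t)=t^{\lambda}$ is nonincreasing with $\gamma(t)\to0$, so part (iii) yields $\mathcal M=\{0\}$. If $\lambda=0$, then $\gamma\equiv1$ is equivalent to $1$, so part (ii) yields $\mathcal M=H^\infty(\mathbb B^n)$. If $\lambda>0$, then $\gamma(t)=t^{\lambda}$ is nondecreasing, so part (i) yields $\mathcal M=H_\omega^\infty(\mathbb B^n)$ with $\omega(s)=\gamma(1/s)=s^{-\lambda}$. These three outcomes match the three items of the corollary, once the growth space $H_\omega^\infty$ arising when $\lambda>0$ is identified with $\mathcal B^{\lambda+1}$.

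It remains to carry out this last identification, which I expect to be the delicate point. Writing $\omega(s)=s^{-\lambda}=s^{1-(\lambda+1)}$ exhibits the weight in the form $t^{1-\mu}$ with $\mu=\lambda+1>1$, which by the remark following the definition of $\Lambda_\omega$ corresponds to $\mathcal B^{\lambda+1}$; thus as soon as Proposition \ref{prop:sehbastevic} is applicable one gets $H_\omega^\infty=\Lambda_\omega=\mathcal B^{\lambda+1}$. The obstacle is the membership $\omega\in\Omega$: the function $\omega(s)=s^{-\lambda}$ is nonincreasing and $1/\omega(s)=s^{\lambda}$ has positive lower type, but the requirement that $s\mapsto s\omega(s)=s^{1-\lambda}$ be increasing holds only for $0<\lambda<1$. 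Hence for $0<\lambda<1$ Proposition \ref{prop:sehbastevic} applies directly, whereas for $\lambda\ge1$ I would instead invoke the classical radial-derivative characterization of higher-order Bloch spaces, namely that for $\mu>1$ the space $\mathcal B^{\mu}$ coincides with the growth space $\{f:\ (1-|z|^2)^{\mu-1}|f(z)|\ \text{is bounded}\}$, which with $\mu=\lambda+1$ is exactly $H_\omega^\infty$. Combining the two ranges gives $H_\omega^\infty=\mathcal B^{\lambda+1}$ for every $\lambda>0$ and completes the proof.
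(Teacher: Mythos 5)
Your proof is correct and follows the same route as the paper: the paper's entire justification of this corollary is the sentence preceding it, namely that it ``follows from Proposition \ref{prop:sehbastevic} and Theorem \ref{thm:main3}'', which is exactly your specialization $\Phi_1(t)=t^p$, $\Phi_2(t)=t^q$ (both legitimately in the classes required by Theorem \ref{thm:main3}, with $\Phi_2/\Phi_1=t^{q-p}$ nondecreasing), the computation $\gamma(t)=t^{\lambda}$, and the trichotomy on the sign of $\lambda$. Where you go beyond the paper is the final identification $H_\omega^\infty=\mathcal B^{\lambda+1}$ for $\omega(s)=s^{-\lambda}$, $\lambda>0$, and your extra care there is warranted: the paper implicitly applies Proposition \ref{prop:sehbastevic} for every $\lambda>0$, but membership $\omega\in\Omega$ requires $s\mapsto s\omega(s)=s^{1-\lambda}$ to be increasing, which holds only when $0<\lambda<1$, so for $\lambda\ge 1$ the proposition as stated does not apply. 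Your patch --- invoking the classical characterization (Zhu's theorem for the ball) that for $\mu>1$ the space $\mathcal B^{\mu}$ coincides with the growth space of holomorphic $f$ with $(1-|z|^2)^{\mu-1}|f(z)|$ bounded, applied with $\mu=\lambda+1$ --- is exactly what is needed to cover $\lambda\ge1$, and it is consistent with the $0<\lambda<1$ range where Proposition \ref{prop:sehbastevic} does apply. So: same approach as the paper, with a genuine (if small) gap in the paper's one-line derivation correctly identified and filled.
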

\bibliographystyle{plain}

\end{document}